\newcommand{\mathbbm}{\mathbb} 
\newcommand{\R}{\mathbbm{R}}
\newcommand{\Rb}{\mathbbm{R}}
\newcommand{\Eb}{\mathbbm{E}}
\newcommand{\Pc}{\mathcal{P}}
\renewcommand{\Phi}{\varPhi}
\renewcommand{\Pi}{\varPi}
\newcommand{\Ac}{\mathcal{A}}
\newcommand{\Yc}{\mathcal{Y}}
\newcommand{\Zc}{\mathcal{Z}}
\newcommand{\Fc}{\mathcal{F}}
\newcommand{\Xc}{\mathcal{X}}
\newcommand{\Gf}{\mathfrak{G}}
\newcommand{\Df}{\mathfrak{D}}
\newcommand{\Qc}{\mathcal{Q}}
\newcommand{\1}{\mathbbm{1}}
\newcommand{\Mf}{\mathfrak{M}}
\newcommand{\essup}{\mathop{\rm essup}}
\newcommand{\Lc}{\mathcal{L}}
\newcommand{\Wc}{\mathcal{W}}
\newcommand{\Tc}{\mathcal{T}}
\newcommand{\Sc}{\mathcal{S}}
\newcommand{\Sf}{\mathfrak{S}}
\newcommand{\Bc}{\mathcal{B}}
\newcommand{\Mc}{\mathcal{M}}
\newcommand{\Ic}{{I}}
\newcommand{\Qs}{\mathcal{Q}}
\newenvironment{tightlist}[1]{%
    \list{{\rm(\roman{enumi})}}{\settowidth\labelwidth{{\rm(#1)}}
    \leftmargin\labelwidth \advance\leftmargin\labelsep
    \parsep 0pt plus 1pt minus 1pt \topsep 5pt \itemsep 5pt
    \usecounter{enumi}}}{\endlist}
\title{Time-Consistent Risk Measures for Continuous-Time Markov Chains}
\author{Darinka Dentcheva\thanks{Stevens Institute of Technology,
Department of Mathematical Sciences, Castle Point on Hudson, Hoboken, NJ 07030,  Email: {darinka.dentcheva@stevens.edu}.}
\and
Andrzej Ruszczy\'{n}ski\thanks{Rutgers University,
Department of Management Science and Information Systems,
100 Rockefeller Rd, Piscataway, NJ 08854, USA, Email: {rusz@rutgers.edu}.}}
\begin{document}

\maketitle

\begin{abstract}
We develop an approach to time-consistent risk evaluation of continuous-time processes in Markov systems.
Our analysis is based on dual representation of coherent risk measures,
differentiability concepts for multivalued mappings, and a refined concept of time consistency.
We prove that the risk measures are defined by a family of
risk evaluation functionals (transition risk mappings), which depend on state, time, and the transition function. Their dual representations are risk multikernels of the Markov system.
We introduce the concept of a semi-derivative of a risk multikernel and use it
to generalize the concept of a generator of a Markov process. Using these semi-derivatives,
we derive a system of ordinary differential equations that the risk evaluation must satisfy,
which generalize the classical backward Kolmogorov equations for Markov processes.
Additionally, we construct convergent discrete-time approximations to the continuous-time risk measures.\\
\emph{Keywords:} Dynamic Risk Measures, Time Consistency, Risk Multikernels, Risk Multigenerators, Backward Equations, Discrete-Time Approximations

\end{abstract}



\pagestyle{myheadings}
\thispagestyle{plain}
\markboth{Darinka Dentcheva and Andrzej Ruszczy\'nski}{Time-Consistent Risk Measures for Continuous-Time Markov Chains}

\newtheorem{theorem}{Theorem}[section]
\newtheorem{proposition}[theorem]{Proposition}
\newtheorem{corollary}[theorem]{Corollary}
\newtheorem{lemma}[theorem]{Lemma}
\newtheorem{definition}[theorem]{Definition}
\newtheorem{assumption}[theorem]{Assumption}
\newtheorem{remark}[theorem]{Remark}
\newtheorem{example}[theorem]{Example}



\section{Introduction}

In this paper, we focus on time-consistent risk evaluation of continuous-time processes in Markov systems.
While the theory of dynamic risk measures is quite advanced, Markov systems require special attention and dedicated
analysis, due to their wide practical application.

The theory of risk measures has been initiated in \cite{Artzner-Delbaen-Eber-Heath-1999,Kijima-Ohnishi-1993} and developed mainly in the area of finance
(see \cite{Cheridito-Li-2008,Delbaen-2002,Follmer-Schied-2002,Follmer-Schied-2004,Ruszczynski-Shapiro-2006a} and the references therein). The main thrust was to study continuity and differentiability properties, and to develop the dual representation of risk measures in various functional space settings.

The foundations of conditional risk mappings and dynamic risk measures were developed in \cite{Scandolo-2003}. Further advances were made in \cite{Detlefsen-Scandolo-2005,Riedel-2004,Ruszczynski-Shapiro-2006b,Bion-Nadal-2008}. The theory of dynamic measures of risk in discrete time  were developed in  \cite{Artzner-Delbaen-Eber-Heath-Ku-2007,Boda-Filar-2006,Cheridito-Delbaen-Kupper-2006,Cheridito-Kupper-2011,Detlefsen-Scandolo-2005,Eichhorn-Romisch-2005,Follmer-Penner-2006,Pflug-Romisch-2007}.
In this research, the concept of time-consistency plays a key role.
It is well-known that
most static risk measures evaluated on a random final cost $Z_T$ cannot be represented in a time-consistent way (as compositions of one-step conditional risk measures).

Continuous-time measures of risk and different notions of their time-consistency were analyzed in \cite{Bion-Nadal-2009,Cheridito-Delbaen-Kupper-2004,Kloppel-Schweizer-2007,Roorda-Schumacher-2007,Sircar-Sturm-2015}, among others.
Most effort was devoted again to time-dependent dual representations, semimartingale properties, and to applications to finance.
The development of the theory of continuous-time measures of risk suffered a slowdown after a
result of \cite{Kupper-Schachermayer-2009}, which shows that law invariant
risk measures allow dynamically consistent updates only when they belong to the
family of \emph{entropic risk measures}, which is parametrized by a single scalar parameter.
This result, however, has been obtained in the setting of one final cost, and with law invariance
understood as equality of the risk evaluation for all intermediate times and for all random variables having identical distribution. In particular, when two random variables
have the same distribution, but their conditional distributions at intermediate times differ, the assumptions of \cite{Kupper-Schachermayer-2009} still require that their risk evaluations at all times are identical.

Our goal is to develop the theory of dynamic risk measures for continuous-time stochastic processes that can emerge in a Markov system.
So far, only a limited number of works follow this avenue. In \cite{Ruszczynski-2010}, we proposed a class of discrete-time risk measures, which we called Markov measures,
and we applied them to measure risk in Markov decision processes. In \cite{Fan-Ruszczynski-2015}, the structure of these measures was derived from a refined concept of time consistency,
and their application to partially observable discrete-time processes was studied.
In continuous time, the article  \cite{coquet2002filtration} derives the structure of dynamic risk measures for Brownian filtrations, showing that they can be obtained as solutions
to backward stochastic differential equations. In \cite{Stadje-2010}, the drivers of these equations are related to one-step conditional risk measures in short time intervals.

In the present paper, we focus on continuous-time and discrete-space Markov systems. We derive a system of ordinary differential equations for the risk process, which generalizes the classical backward Kolmogorov equations. This derivation is based on
dual representation of coherent risk measures,
methods of set-valued analysis, an a refined concept of time consistency. The use
conditional distributions in a Markov system helps avoid the paradox of \cite{Kupper-Schachermayer-2009}.  Our approach also allows for a construction of discrete-time approximations and analysis of their convergence to the continuous-time risk evaluation.

The key step in our analysis is based on generalized differentiation of multivalued mappings arising in our context, which
we call \emph{risk multikernels}. We differentiate them in the direction of the generator of the Markov system. In this way,
the Markov dynamics of the system and the risk model are integrated.
Several concepts of differentiability of a multifunction are available in the literature (\emph{e.g.}, \cite{Aubin,Aubin-Frankowska-1990,Penot,Rock1989,Ursescu}). We use a version of semi-differentiability, which
is associated with a similar but not equivalent notion, introduced in \cite{Penot}; it corresponds to the concept of tangential approximations due to Robinson \cite{Robinson}. Semi-differentiability properties of multifuctions were studied in detail in \cite{Dentcheva1998,Dentcheva2000,Dentcheva2001} and were applied to stability analysis and asymptotic behavior of stochastic optimization problems (see also \cite{King}).

The paper is organized as follows. In section \ref{s:preliminaries}, we recall the relevant concepts related to the main objects of our study: the continuous-time discrete-space Markov system, and dynamic risk measures. In section \ref{s:SCTC}, we introduce the concept of stochastic conditional time consistency for risk measures in Markov systems.
In section \ref{s:transition-risk}, we analyze the structure of such risk measures. We prove that they are defined by a family
of  state- and time-dependent functionals, which we call \emph{transition risk mappings}.
Section \ref{s:short} studies risk contributions in
infinitesimal increments of time. In section \ref{s:dual}, we derive dual representations of transition risk mappings,
in form of \emph{risk multikernels}, as introduced in discrete-time in \cite{Ruszczynski-2010}.
Section \ref{s:multikernels} uses techniques of set-valued analysis to study differential properties of risk multikernels.
We introduce the concept of a semi-derivative in this context, similar to \cite{Dentcheva2000}.
Semi-derivatives of risk multikernels generalize the concept of a generator of a Markov process; we call them \emph{risk multigenerators}.
We calculate the multigenerators for risk transition mappings derived from the Average Value at Risk and mean--semideviation
risk measures.
In section \ref{s:BDE}, we derive a system of ordinary differential equations that the risk evaluation must satisfy.
These equations generalize the classical backward Kolmogorov equations for Markov processes. Finally, in section \ref{s:discrete-approximations},
we use our results to construct convergent discrete-time risk approximations to a risk measure in a continuous-time model.

\section{Preliminaries}
\label{s:preliminaries}

\subsection{A Continuous-Time Markov Chain}

Let ${\Xc}$ be a finite \emph{state space}. We consider a continuous-time Markov chain $\{X_t\}_{0\le t \le T}$ with the transition function
$Q_{t,r}(y|x)=P(X_{r}=y\,|\, X_t=x)$,
 where $x,y\in \Xc$ and $0 \le t < r \le T$. We assume that the \emph{transition rates}
\begin{equation}
\label{generator}
G_t(y|x) = \lim_{\tau \downarrow 0} \frac{1}{\tau}\big[ Q_{t,t+\tau}(y|x) - \delta_x(y)\big],\quad x,y\in \Xc,
\end{equation}
are well-defined, finite, and uniformly bounded for all $0 \le t \le T$.
Here, $\delta_x(y) = 1$, if $y=x$, and 0 otherwise. Clearly, we have $G_t(y|x)\ge 0$ for all $y\ne x$, and
$\sum_{y\in \Xc}G_t(y|x)=0$, for all $x\in \Xc$.

We adopt the view of a \emph{generator} $G_t$
as a mapping from $\Xc$ to the set $\Mc(\Xc)$ of signed measures on~${\Xc}$; its value at state $x$ is $G_t(\,\cdot\,|x)$.

Let us denote by $\varXi_{t,r}^\xi$, where $0 \le t < r \le T$, the space of piecewise-constant,
right-continuous functions $x:[t,r]\to\Xc$ with $x_t=\xi$. The space is equipped with the $\sigma$-algebra generated by the finite-dimensional cylinders of the form $\big\{x\in \varXi_{t,r}^\xi: x_{t_i}=y_i, i=1,\dots,I\big\}$, where $I$ is any natural number,
$t_i\in [t,r]$, $y_i\in \Xc$. It is well known (see, \emph{e.g.}, \cite[sec. 4.5]{Cinlar}) that for every $\xi\in \Xc$,
the transition function $Q$ defines a probability measure
$P_{t,r}^\xi$ on  the space $\varXi_{t,r}^\xi$. A process $\big\{X_{\tau}^{t,\xi}\big\}_{t\le \tau\le r} $  with paths in $\varXi_{t,r}^\xi$ distributed according to this measure exists. We define $\varXi_{t,r} = \bigcup_{\xi\in \Xc}\varXi_{t,r}^\xi$.

Let $\{\Fc_t\}_{0\le t \le T}$ be the filtration  generated by the process $\{X_t\}_{0\le t \le T}$.
We consider  stochastic processes $\{Z_t\}_{0 \leq t \leq T}$, taking values in ${\R}$, adapted to this filtration.

Then, for each $t$, a measurable functional $\phi_t : \varXi_{0,t}^{x_0} \rightarrow {\R}$ exists such that
$Z_t=\phi_t\big(X_{[0,t]}^{0,x_0}\big)$. With an abuse of notation, we still use $Z_t$ to denote this functional.
We denote by $\Zc_t$ the space of
all bounded $\Fc_t$-measurable random variables.
We assume that lower values of $Z_t$ are preferred, \emph{e.g.}, $Z_t$ represents  `` cumulative cost'' evaluated at ti\-me~$t$.


\subsection{Dynamic Measures of Risk}

We briefly recall basic definitions of conditional and dynamic risk measures.
\begin{definition} \label{d:CRM}
 A mapping $\varrho_{t,T}: {\Zc}_{T} \to {\Zc}_t$, where $0 \le t \le T$, is called a \emph{conditional risk measure}.
\begin{tightlist}{vii}
\item It is \emph{monotonic} if for all $Z_T\le W_T$ in  ${\Zc}_{T}$ we have $\varrho_{t,T}(Z_T) \le \varrho_{t,T}(W_T)$;
\item It is \emph{normalized} if $\varrho_{t,T}(0)=0$;
\item It is \emph{translation invariant} if for all $Z_T \in {\Zc}_{T}$ and all $Z_t \in {\Zc}_{t}$, we have\\
    $\varrho_{t,T}(Z_t+Z_T)=Z_t+\varrho_{t,T}(Z_T);$
 \item It is \emph{convex} if for all $Z_T,W_T\in \Zc_T$ and all $\alpha\in [0,1]$ we have\\
 $\varrho(\alpha Z_T+(1-\alpha)W_T) \le \alpha \varrho(Z_T) + (1-\alpha)\varrho(W_T)$;
 \item It is \emph{positively homogeneneous} if for all $Z_T\in \Zc_T$ and all $\gamma\ge 0$ we have\\
 $\varrho(\gamma Z_T) = \gamma \varrho(Z_T)$;
 \item It is \emph{coherent} if it is monotonic, translation invariant, convex, and positively homogeneous;
 \item   It has the \emph{local property} if for all $Z_T \in {\Zc}_{T}$ and for any event $A \in {\Fc}_t$, we have \\
 $\varrho_{t,T}(\1_A Z_T)=\1_A \,\varrho_{t,T}(Z_T).$
\end{tightlist}
\end{definition}

\begin{definition} \label{def_DRM}
A \emph{dynamic risk measure} $\varrho=\big\{\varrho_{t,T}\big\}_{t \in [0,T]} $ is a collection of conditional risk measures $\varrho_{t,T}: {\Zc}_{T} \to {\Zc}_t$, $t \in [0,T]$. We say that $\varrho$ is monotonic, normalized, translation-invariant, convex, positively homogeneous,
coherent, or has the local property, if all $\varrho_{t,T}$ for $t\in [0,T]$ satisfy the respective conditions of Definition \ref{d:CRM}.
\end{definition}

The key role in the theory of dynamic risk measures is played by the concept of \emph{time consistency}.
\begin{definition} \label{def_TC}
A \emph{dynamic risk measure} $\varrho=\big\{\varrho_{t,T}\big\}_{t \in [0,T]} $ is \emph{time-consistent}
if for all $0 \le t \le r \le T$ and
all $Z_T\in \Zc_T$ it satisfies the equation $\varrho_{t,T}(Z_T) = \varrho_{t,T}\big(\varrho_{r,T}(Z_T)\big)$.
\end{definition}

\section{Stochastic Conditional Time Consistency}
\label{s:SCTC}

We refine the concept of time consistency by employing conditional distributions and stochastic orders, extending the discrete-time
construction of \cite{Fan-Ruszczynski-2015}. Suppose the history (path) $\xi_{[0,t]}$ of the process $X$
up to time $t$ is fixed. For a future time $r$, a random variable in $\Zc_r$  is  a function of the path $\xi_{[0,r]}$.
 In particular, we may consider the risk measure
$\varrho_{r,T}(Z_T)$ as such random variable. In the following definition, we compare the conditional
distribution of this random variable to the conditional distribution of $\varrho_{r,T}(W_T)$, for another $W_T\in \Zc_T$.
We use the symbol $\varrho_{r,T}(Z_T)\,|\, \xi_{[0,t]}$ to denote the risk measure $\varrho_{r,T}(Z_T)$
as a function of $\xi_{[t,r]}$, with $\xi_{[0,t]}$ fixed. We write $\varrho_{t,T}(Z_T)(\xi_{[0,t]})$ for the
value of the measure $\varrho_{t,T}(Z_T)$ at the history $\xi_{[0,t]}$.

\begin{definition} \label{d:TCC}
A \emph{dynamic risk measure} $\varrho=\big\{\varrho_{t,T}\big\}_{t \in [0,T]} $ is \emph{stochastically conditionally time-consistent},
if for all $0 \le t \le r \le T$,  all $\xi_{[0,t]}\in \varXi_{[0,t]}$, and all $Z_T,W_T\in \Zc_T$,
the relation
\[
\varrho_{r,T}(Z_T)\,|\, \xi_{[0,t]} \preceq_{\text{\rm st}} \varrho_{r,T}(W_T)\,|\, \xi_{[0,t]}
\]
implies that
\begin{equation}
\label{pref-t}
\varrho_{t,T}(Z_T)(\xi_{[0,t]}) \le \varrho_{t,T}(W_T)(\xi_{[0,t]}).
\end{equation}
It is \emph{strongly stochastically conditionally time-consistent}, if for any two times $r_1,r_2 \in [t,T]$, the relation
\[
\varrho_{r_1,T}(Z_T)\,|\, \xi_{[0,t]} \preceq_{\text{\rm st}} \varrho_{r_2,T}(W_T)\,|\, \xi_{[0,t]}
\]
implies \eqref{pref-t}.
\end{definition}

The stochastic order ``$\preceq_{\text{\rm st}}$'' is understood as follows: for all $\eta \in \R$
\[
P_{t,r_1}^{\xi_t}\big\{\eta < \varrho_{r_1,T}(Z_T)\,|\,\xi_{[0,t]}  \big\}
\le P_{t,r_2}^{\xi_t}\big\{\eta < \varrho_{r_2,T}(W_T)\,|\, \xi_{[0,t]}  \big\}.
\]

\begin{theorem}
\label{t:CTC}
If a dynamic risk measure $\varrho=\big\{\varrho_{t,T}\big\}_{t \in [0,T]} $ is {stochastically conditionally time-consistent},
normalized, and has the translation property, then it is
time consistent and has the local property.
\end{theorem}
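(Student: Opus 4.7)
The plan is to derive both conclusions from a single observation: \textbf{if two $\Zc_T$-variables $Z_T$ and $W_T$ induce the same conditional distribution of $\varrho_{r,T}(\cdot)$ given $\xi_{[0,t]}$, then applying stochastic conditional time consistency in both directions yields $\varrho_{t,T}(Z_T)(\xi_{[0,t]}) = \varrho_{t,T}(W_T)(\xi_{[0,t]})$.} A preliminary step I would establish first is that translation invariance plus normalization imply $\varrho_{s,T}$ acts as the identity on $\Zc_s$: for any $W\in\Zc_s$, set $Z_T=0$ in the translation property to get $\varrho_{s,T}(W)=W+\varrho_{s,T}(0)=W$. In particular $\varrho_{T,T}=\mathrm{id}$, and $\varrho_{r,T}(W_T)=W_T$ whenever $W_T\in\Zc_r$.

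For time consistency, fix $0\le t\le r\le T$ and $Z_T\in\Zc_T$, and define $W_T:=\varrho_{r,T}(Z_T)\in\Zc_r\subset\Zc_T$. By the preliminary step $\varrho_{r,T}(W_T)=W_T=\varrho_{r,T}(Z_T)$, so for every history $\xi_{[0,t]}$ the two conditional laws $\varrho_{r,T}(Z_T)\,|\,\xi_{[0,t]}$ and $\varrho_{r,T}(W_T)\,|\,\xi_{[0,t]}$ are identical. Applying Definition~\ref{d:TCC} with the roles of $Z_T$ and $W_T$ interchanged gives
\[
\varrho_{t,T}(Z_T)(\xi_{[0,t]}) = \varrho_{t,T}(W_T)(\xi_{[0,t]}) = \varrho_{t,T}\bigl(\varrho_{r,T}(Z_T)\bigr)(\xi_{[0,t]}),
\]
and since $\xi_{[0,t]}$ was arbitrary, the required identity $\varrho_{t,T}(Z_T)=\varrho_{t,T}(\varrho_{r,T}(Z_T))$ follows.

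For the local property, fix $A\in\Fc_t$, $Z_T\in\Zc_T$, and a history $\xi_{[0,t]}$; I would invoke SCTC with $r=T$ so that $\varrho_{T,T}$ is the identity and the ``conditional distribution of $\varrho_{r,T}(\cdot)\,|\,\xi_{[0,t]}$'' is simply the conditional law of the random variable itself. If $\xi_{[0,t]}\in A$, then on every extension of $\xi_{[0,t]}$ we have $\1_A=1$, hence $\1_A Z_T$ and $Z_T$ have the same conditional law given $\xi_{[0,t]}$; applying SCTC in both directions yields $\varrho_{t,T}(\1_A Z_T)(\xi_{[0,t]})=\varrho_{t,T}(Z_T)(\xi_{[0,t]})$. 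If $\xi_{[0,t]}\notin A$, then $\1_A Z_T\equiv 0$ on every extension, so $\1_A Z_T$ and the constant $0$ share the same conditional law; SCTC combined with normalization gives $\varrho_{t,T}(\1_A Z_T)(\xi_{[0,t]})=\varrho_{t,T}(0)(\xi_{[0,t]})=0$. Assembling the two cases produces $\varrho_{t,T}(\1_A Z_T) = \1_A\,\varrho_{t,T}(Z_T)$.

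The argument is largely bookkeeping; the only genuinely delicate point is making sure the ``$\preceq_{\text{st}}$'' applied in both directions really amounts to equality of conditional distributions (so that one may read off equality of the $\varrho_{t,T}$-values, not just one-sided inequality). This is immediate from the displayed definition of the stochastic order via tail probabilities, but it is the step that uses the full strength of Definition~\ref{d:TCC} rather than a one-sided monotonicity assumption, and it is where the proof would break if only a weaker form of conditional consistency were imposed.
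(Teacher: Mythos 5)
Your proposal is correct and follows essentially the same route as the paper: translation plus normalization make $\varrho_{r,T}$ the identity on $\Zc_r$, so $\varrho_{r,T}(Z_T)$ and $\varrho_{r,T}(\varrho_{r,T}(Z_T))$ have identical conditional laws and SCTC applied in both directions gives time consistency, while the local property follows from the same two-case analysis ($\xi_{[0,t]}\in A$ versus $\xi_{[0,t]}\notin A$) with $r=T$. The only difference is presentational: you isolate the identity-on-$\Zc_s$ fact as an explicit preliminary lemma and flag the two-sided use of $\preceq_{\text{st}}$, both of which the paper leaves implicit.
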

\begin{proof}
Let us verify time consistency. For any $Z_T\in \Zc_T$, it follows from the translation and normalization properties that
$\varrho_{r,T}(\varrho_{r,T}(Z_T)) = \varrho_{r,T}(Z_T)$. Consequently, for every history $\xi_{[0,t]}$ we have
\[
\varrho_{r,T}(\varrho_{r,T}(Z_T))\,|\, \xi_{[0,t]} \overset{\text{\rm st}}{\thicksim} \varrho_{r,T}(Z_T)\,|\, \xi_{[0,t]}.
\]
Then it follows from Definition \ref{d:TCC} that $\varrho_{t,T}(\varrho_{r,T}(Z_T))= \varrho_{t,T}(Z_T)$, which is time consistency.

Let us verify the local property. For an event $A\in \Fc_t$, we set $r=T$ and $W_T=\1_A Z_T$ in Defini\-tion~\ref{d:TCC}. Two cases may occur:
\begin{tightlist}{ii}
\item If $\xi_{[0,t]}\in A$, then  $\1_A Z_T\,|\, \xi_{[0,t]} \overset{\text{\rm st}}{\thicksim}  Z_T\,|\, \xi_{[0,t]}$ and thus
$\varrho_{t,T}(\1_A Z_T)(\xi_{[0,t]}) = \varrho_{t,T}( Z_T)(\xi_{[0,t]})$;
\item If $\xi_{[0,t]}\notin A$, then $\1_A Z_T\,|\, \xi_{[0,t]} \overset{\text{\rm st}}{\thicksim} 0 \,|\, \xi_{[0,t]}$ and thus
$\varrho_{t,T}(\1_A Z_T)(\xi_{[0,t]}) = \varrho_{t,T}(0)(\xi_{[0,t]}) =0$.
\end{tightlist}

In both cases, $\varrho_{t,T}(\1_A Z_T)(\xi_{[0,t]})  = \big[\1_A \varrho_{t,T}( Z_T)\big](\xi_{[0,t]})$, which is the local property.
\end{proof}

\section{Transition Risk Mappings}
\label{s:transition-risk}
Further advance in our theory can be achieved by restricting the class of random variables $Z_T$ under consideration.
Let us consider the Banach space $\Lc_{\infty}([t,r]\times \Xc)$ of measurable, essentially bounded functions $c:[t,r]\times\Xc\to \R$, with the norm
\[
\| c\| = \max_{x\in \Xc}  \essup_{t \le \tau \le r} |c_\tau(x)| < \infty,
\]
and the space  $\Lc_\infty(\Xc)$  of functions $v:\Xc \to\R$ with the norm
$\|v\| = \max_{x\in \Xc}  |v(x) |$.

For functions $c\in \Lc_{\infty}([t,r]\times \Xc)$  and $f\in \Lc_\infty(\Xc)$, we consider random variables of the following form:
\begin{equation}
\label{Zt}
Z_T(c,f) = \int_0^T c_t(X_t)\;dt + f(X_T).
\end{equation}
We shall derive the structure of risk measures for this class of random variables. For $0 \le t\le r \le T$ and $\xi_t\in \Xc$
we define
\begin{equation}
\label{Ztr}
\begin{aligned}
I_{t,r}^{\xi_t}(c) &= \int_t^r c_\tau(X^{t,\xi_t}_\tau)\;d\tau.
\end{aligned}
\end{equation}
%

\begin{theorem}
\label{t:stc-sigma-full}
If a dynamic risk measure $\varrho=\big\{\varrho_{t,T}\big\}_{t \in [0,T]} $ is {stochastically conditionally time-consistent}, normalized,
and has the translation property, then
for every $0\le t \le r \le T$ and every $\xi_{[0,t]}\in \varXi_{0,t}$
 a functional
 $\varsigma_{t,r}^{\xi_{[0,t]}}: \Lc_\infty(\varXi^{\xi_t}_{t,r},P_{t,r}^{\xi_t})\to \R$ exists, such that
for every $Z_T$ of form \eqref{Zt} we have
\begin{equation}
\label{sigma-full}
\varrho_{t,T}(Z_{T})(\xi_{[0,t]})  = \int_0^t c_\tau(\xi_\tau)\;d\tau
+ \varsigma_{t,r}^{\xi_{[0,t]}}\Big( I_{t,r}^{\xi_t}(c)+\varrho_{r,T}\big(I_{r,T}^{X_r^{t,\xi_t}}\!(c)+f(X^{t,\xi_t}_T)\big) \Big).
\end{equation}
Moreover, for every $\xi_{[0,t]}\in \varXi_{0,t}$, the functional $\varsigma_{t,r}^{\xi_{[0,t]}}$ is
law invariant with respect to the probability measure $P_{t,r}^{\xi_t}$. 
\end{theorem}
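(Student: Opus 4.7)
The plan is to combine time consistency from Theorem~\ref{t:CTC} with the translation property to reduce the formula to a one-step conditional evaluation, and then to use stochastic conditional time consistency in both directions of ``$\preceq_{\text{\rm st}}$'' to obtain the required law invariance. First, by time consistency, $\varrho_{t,T}(Z_T)=\varrho_{t,T}\bigl(\varrho_{r,T}(Z_T)\bigr)$. Decomposing the integral in $Z_T$ as $\int_0^t+\int_t^r+\int_r^T$ and using that the first two parts are $\Fc_r$-measurable, translation invariance of $\varrho_{r,T}$ yields
\[
\varrho_{r,T}(Z_T)=\int_0^r c_\tau(X_\tau)\,d\tau+\varrho_{r,T}\Big(\int_r^T c_\tau(X_\tau)\,d\tau+f(X_T)\Big).
\]
The piece $\int_0^t c_\tau(X_\tau)\,d\tau$ is $\Fc_t$-measurable, so a further use of translation invariance inside $\varrho_{t,T}$ and evaluation at $\xi_{[0,t]}$ gives
\[
\varrho_{t,T}(Z_T)(\xi_{[0,t]})=\int_0^t c_\tau(\xi_\tau)\,d\tau+\varrho_{t,T}(Y)(\xi_{[0,t]}),
\]
where $Y=\int_t^r c_\tau(X_\tau)\,d\tau+\varrho_{r,T}\bigl(\int_r^T c_\tau(X_\tau)\,d\tau+f(X_T)\bigr)$. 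By the Markov property, $Y\mid\xi_{[0,t]}$ may be identified with
\[
U:=I_{t,r}^{\xi_t}(c)+\varrho_{r,T}\bigl(I_{r,T}^{X_r^{t,\xi_t}}(c)+f(X_T^{t,\xi_t})\bigr)\in\Lc_\infty(\varXi_{t,r}^{\xi_t},P_{t,r}^{\xi_t}),
\]
the intended argument of $\varsigma_{t,r}^{\xi_{[0,t]}}$.

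The main obstacle is to show that $\varrho_{t,T}(Y)(\xi_{[0,t]})$ depends on the pair $(c,f)$ only through the law of $U$ under $P_{t,r}^{\xi_t}$, which both justifies setting $\varsigma_{t,r}^{\xi_{[0,t]}}(U):=\varrho_{t,T}(Y)(\xi_{[0,t]})$ and delivers law invariance. Given two pairs $(c_i,f_i)$, $i=1,2$, producing $U_1\overset{\text{\rm st}}{\thicksim}U_2$, I would set $\widehat{Z}_T^i:=\int_t^T c_{i,\tau}(X_\tau)\,d\tau+f_i(X_T)$, so that translation invariance of $\varrho_{r,T}$ yields $\varrho_{r,T}(\widehat{Z}_T^i)=Y_i$. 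The hypothesis then reads
\[
\varrho_{r,T}(\widehat{Z}_T^1)\mid\xi_{[0,t]}\overset{\text{\rm st}}{\thicksim}\varrho_{r,T}(\widehat{Z}_T^2)\mid\xi_{[0,t]},
\]
and Definition~\ref{d:TCC}, applied in each direction of $\preceq_{\text{\rm st}}$, gives $\varrho_{t,T}(\widehat{Z}_T^1)(\xi_{[0,t]})=\varrho_{t,T}(\widehat{Z}_T^2)(\xi_{[0,t]})$. Since $Y_i=\varrho_{r,T}(\widehat{Z}_T^i)$, time consistency from Theorem~\ref{t:CTC} yields $\varrho_{t,T}(Y_i)=\varrho_{t,T}(\widehat{Z}_T^i)$, so $\varrho_{t,T}(Y_1)(\xi_{[0,t]})=\varrho_{t,T}(Y_2)(\xi_{[0,t]})$, as required.

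Finally, I would extend $\varsigma_{t,r}^{\xi_{[0,t]}}$ to the entire space $\Lc_\infty(\varXi_{t,r}^{\xi_t},P_{t,r}^{\xi_t})$ by any law-invariant rule (for example, by assigning a common value on each equivalence class of laws, with zero on classes not realized by some $(c,f)$); this extension does not affect \eqref{sigma-full}. The substantive step is the two-sided use of stochastic conditional time consistency just described; the rest of the argument consists of direct applications of translation invariance, time consistency, and the Markov property.
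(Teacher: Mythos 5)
Your proposal is correct and follows essentially the same route as the paper: both define $\varsigma_{t,r}^{\xi_{[0,t]}}$ on the conditional time-$r$ evaluations and justify its well-definedness (and law invariance) by applying stochastic conditional time consistency in both directions of $\preceq_{\text{\rm st}}$, then peel off the $\int_0^t$ term by translation invariance. Your write-up merely makes explicit two steps the paper leaves implicit --- the well-definedness argument via the pair $(\widehat{Z}_T^1,\widehat{Z}_T^2)$ and the extension of $\varsigma_{t,r}^{\xi_{[0,t]}}$ to all of $\Lc_\infty(\varXi_{t,r}^{\xi_t},P_{t,r}^{\xi_t})$.
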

\begin{proof}
Suppose we are interested in the evaluation of risk of two
random variables:
\begin{align*}
W &= \int_t^T c_\tau(X_\tau)\;d\tau + f(X_T),\\
W' &= \int_t^T c'_\tau(X_\tau)\;d\tau + f'(X_T),
\end{align*}
with functions $c,c'\in \Lc_{\infty}([t,r]\times \Xc)$ and $f,f'\in \Lc_\infty(\Xc)$.

Consider any $r\in (t,T]$. Due to the stochastic conditional time consistency, if
\begin{equation}
\label{Ztsim}
\varrho_{r,T}(W)\,|\, \xi_{[0,t]} \overset{\text{\rm st}}{\thicksim} \varrho_{r,T}(W')\,|\, \xi_{[0,t]},
\end{equation}
then
\[
\varrho_{t,T}(W)(\xi_{[0,t]}) = \varrho_{t,T}(W')(\xi_{[0,t]}).
\]
It follows that a function $\varsigma_{t,r}^{\xi_{[0,t]}}:\Lc_\infty(\varXi^{\xi_t}_{t,r}\times\Xc)\to \R$ exists such that
\[
\varrho_{t,T}(W)(\xi_{[0,t]})  = \varsigma_{t,r}^{\xi_{[0,t]}}\big(  \varrho_{r,T}(W)\,|\, \xi_{[0,t]} \big).
\]
Law invariance follows from the fact that in \eqref{Ztsim} only the distribution of
$\varrho_{r,T}(W)\,|\, \xi_{[0,t]}$ matters. Due to the translation property, we obtain the following
risk evaluation of $Z_T = \int_0^t c_\tau(X_\tau)\;d\tau +W$:
\[
\varrho_{t,T}(Z_{T})(\xi_{[0,t]})  = \int_0^t c_\tau(\xi_\tau)\;d\tau + \varrho_{t,T}(W)(\xi_{[0,t]}).
\]
By virtue of Theorem \ref{t:CTC}, $\varrho$ has the local property, and thus
formula \eqref{sigma-full} follows.
\end{proof}

Further refinement can be achieved by restricting the class of measures of risk.
\begin{definition} \label{d:MRM}
A dynamic risk measure $\varrho=\big\{\varrho_{t,T}\big\}_{t \in [0,T]} $ is \emph{Markovian},
if for all $0 \le t \le T$,  all $\xi_{[0,t]},\xi'_{[0,t]}\in \varXi_{[0,t]}$,
the equality $\xi_t=\xi'_t$ implies the following equation for all funct\-ions $c\in \Lc_\infty([t,T]\times\Xc)$ and $f\in \Lc_\infty(\Xc)$:
\[
\varrho_{t,T}\big(I_{t,T}^{\xi_t}(c)+f(X^{t,\xi_t}_T)\big)(\xi_{[0,t]})
= \varrho_{t,T}\big(I_{t,T}^{\xi'_t}(c)+f(X^{t,\xi'_t}_T)\big)(\xi'_{[0,t]}).
\]
\end{definition}
From now on, for brevity, Markovian risk measures having the local property will be denoted as follows:
\[
v_t(\xi_t) = \varrho_{t,T}\big(I_{t,T}^{\xi_t}(c)+f(X^{t,\xi_t}_T)\big)(\xi_{[0,t]}).
\]

We can now formulate the following corollary of Theorem \ref{t:stc-sigma-full}.
\begin{corollary}
\label{c:Markov}
If a dynamic risk measure $\varrho=\big\{\varrho_{t,T}\big\}_{t \in [0,T]} $ is {stochastically conditionally time-consistent},
translation invariant, and Markovian, then for every $0\le t \le r \le T$ and every $\xi_t\in \Xc$ a functional $\varsigma_{t,r}^{\xi_t}: \Lc_\infty(\varXi^{\xi_t}_{t,r},P_{t,r}^{\xi_t})\to \R$ exists such that, for every $Z_T$ of form~\eqref{Zt}, we have
\begin{equation}
\label{sigma-full-cor}
v_t(\xi_t) =
 \varsigma_{t,r}^{\xi_t}\big( I_{t,r}^{\xi_t}(c)+ v_r(X^{t,\xi_t}_r) \big).
\end{equation}
Moreover, the functional $\varsigma_{t,r}^{\xi_t}(\cdot)$ is
law invariant with respect to the probability measure $P_{t,r}^{\xi_t}$. If $\varrho$ is coherent, then
$\varsigma_{t,r}^{\xi_t}(\cdot)$ is a coherent measure of risk.
\end{corollary}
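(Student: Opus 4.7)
The plan is to specialize Theorem \ref{t:stc-sigma-full} using the Markov property of $X$ together with the Markovian property of $\varrho$ (Definition \ref{d:MRM}) to collapse the dependence on the full history $\xi_{[0,t]}$ down to dependence on the present state $\xi_t$ alone.

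First, I would apply Theorem \ref{t:stc-sigma-full} to a $Z_T$ of form \eqref{Zt} whose cost function $c$ is supported on $[t,T]$, so that $\int_0^t c_\tau(\xi_\tau)\,d\tau = 0$. With the abbreviation $v_t(\xi_t) = \varrho_{t,T}\big(I_{t,T}^{\xi_t}(c)+f(X^{t,\xi_t}_T)\big)(\xi_{[0,t]})$, equation \eqref{sigma-full} becomes
\[
v_t(\xi_t) = \varsigma_{t,r}^{\xi_{[0,t]}}\Big( I_{t,r}^{\xi_t}(c)+\varrho_{r,T}\big(I_{r,T}^{X_r^{t,\xi_t}}(c)+f(X^{t,\xi_t}_T)\big) \Big).
\]
For each realized continuation $\xi_{[t,r]}$, the inner $\varrho_{r,T}$-term, evaluated at the concatenated history, depends only on its terminal state $\xi_r$ by Definition \ref{d:MRM} and the Markov property of $X$, and equals $v_r(\xi_r)$. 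The argument of $\varsigma_{t,r}^{\xi_{[0,t]}}$ thus rewrites as the $\varXi_{t,r}^{\xi_t}$-random variable $I_{t,r}^{\xi_t}(c)+v_r(X_r^{t,\xi_t})$, whose law under $P_{t,r}^{\xi_t}$ depends only on $\xi_t$.

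Second, I would argue that $\varsigma_{t,r}^{\xi_{[0,t]}}$ depends on the past only through $\xi_t$. Since the left-hand side $v_t(\xi_t)$ depends only on $\xi_t$, and Theorem \ref{t:stc-sigma-full} supplies law invariance of $\varsigma_{t,r}^{\xi_{[0,t]}}$ under $P_{t,r}^{\xi_t}$, any two histories $\xi_{[0,t]}, \xi'_{[0,t]}$ sharing the same terminal state produce functionals that agree on arguments of the form $I_{t,r}^{\xi_t}(c)+v_r(X_r^{t,\xi_t})$. We may therefore define $\varsigma_{t,r}^{\xi_t}$ to be any law-invariant functional on $\Lc_\infty(\varXi_{t,r}^{\xi_t}, P_{t,r}^{\xi_t})$ extending this common restriction, and \eqref{sigma-full-cor} follows. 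When $\varrho$ is coherent, monotonicity, translation invariance, convexity, and positive homogeneity of $\varsigma_{t,r}^{\xi_t}$ are transferred from $\varrho_{t,T}$ via \eqref{sigma-full-cor}: the last three by linear manipulations of the pair $(c,f)$, monotonicity by comparing pairs with common $c$ and $f\le f'$ and invoking monotonicity of $\varrho_{t,T}$.

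The main technical obstacle is the history-collapse in the second step, which relies on the interplay of law invariance with both Markov properties. A minor subtlety is that our construction only pins down $\varsigma_{t,r}^{\xi_t}$ on those random variables produced by admissible $(c,f)$; but any law-invariant extension to the ambient $\Lc_\infty$-space is admissible, since only identity \eqref{sigma-full-cor} is asserted.
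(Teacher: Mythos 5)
The paper states this corollary without proof (it is presented as an immediate consequence of Theorem \ref{t:stc-sigma-full} and Definition \ref{d:MRM}), and your proposal fills in exactly the intended argument: evaluate the inner $\varrho_{r,T}$ term pathwise via the local and Markovian properties to get $v_r(X_r^{t,\xi_t})$, then use law invariance of $\varsigma_{t,r}^{\xi_{[0,t]}}$ under $P_{t,r}^{\xi_t}$ together with Markovianness of $\varrho$ to collapse the history dependence to $\xi_t$. This is correct and follows the paper's route; the only point worth tightening is at the end, where for the coherence claim you should take a \emph{coherent} law-invariant extension (or verify the axioms on the span of the admissible arguments), since an arbitrary law-invariant extension need not inherit monotonicity, convexity, translation invariance, and positive homogeneity off the class of random variables generated by pairs $(c,f)$.
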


\section{Transition Risk Mappings in Short Intervals}
\label{s:short}

If $c\equiv 0$, we have $v_t(\xi_t)  = \varrho_{t,T}\big(f(X^{t,\xi_t}_T)\big)(\xi_{[0,t]})$ and our results simplify in a substantial way.

\begin{corollary}
\label{c:Markov-0}
If a dynamic risk measure $\varrho=\big\{\varrho_{t,T}\big\}_{t \in [0,T]} $ is {stochastically conditionally time-consistent},
translation invariant, and Markovian, then for every $0\le t \le r \le T$ and every $\xi_t\in \Xc$ a functional
$\varsigma_{t,r}^{\xi_t}: \Lc_\infty(\Xc)\to \R$
exists such that for every $Z_T=f(X_T)$ we have
\begin{equation}
\label{sigma-full-cor0}
v_t(\xi_t) =
 \varsigma_{t,r}^{\xi_t}\big( v_r(X^{t,\xi_t}_r) \big).
\end{equation}
Moreover, the functional $\varsigma_{t,r}^{\xi_t}(\cdot)$ is
law invariant with respect to the probability measure $Q_{t,r}(\cdot|\xi_t)$. If $\varrho$ is coherent, then
$\varsigma_{t,r}^{\xi_t}(\cdot)$ is a coherent measure of risk.
\end{corollary}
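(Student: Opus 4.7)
The plan is to derive Corollary~\ref{c:Markov-0} as a direct specialization of Corollary~\ref{c:Markov} to the case $c\equiv 0$, with the additional observation that in this case the argument of the transition mapping depends only on the terminal state $X_r^{t,\xi_t}$, which lets us replace the path-space domain $\Lc_\infty(\varXi_{t,r}^{\xi_t}, P_{t,r}^{\xi_t})$ with the state-space domain $\Lc_\infty(\Xc)$ and to replace law invariance with respect to $P_{t,r}^{\xi_t}$ by law invariance with respect to the transition kernel $Q_{t,r}(\,\cdot\,|\xi_t)$.

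First, I would invoke Corollary~\ref{c:Markov} to obtain a functional $\tilde{\varsigma}_{t,r}^{\xi_t}: \Lc_\infty(\varXi_{t,r}^{\xi_t}, P_{t,r}^{\xi_t}) \to \R$ such that \eqref{sigma-full-cor} holds. Setting $c\equiv 0$ yields $I_{t,r}^{\xi_t}(c)=0$, so \eqref{sigma-full-cor} reduces to $v_t(\xi_t) = \tilde{\varsigma}_{t,r}^{\xi_t}\bigl(v_r(X_r^{t,\xi_t})\bigr)$, and the argument of $\tilde\varsigma_{t,r}^{\xi_t}$ is the path-functional $\omega \mapsto v_r(\omega_r)$, which only depends on the terminal state $\omega_r \in \Xc$. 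This is the crux of the argument: under $P_{t,r}^{\xi_t}$, the random variable $v_r(X_r^{t,\xi_t})$ has the same distribution as a random variable of the form $g(X_r^{t,\xi_t})$ with $g\in \Lc_\infty(\Xc)$, and the law of $X_r^{t,\xi_t}$ under $P_{t,r}^{\xi_t}$ is precisely $Q_{t,r}(\,\cdot\,|\xi_t)$.

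Second, using the law invariance of $\tilde{\varsigma}_{t,r}^{\xi_t}$ with respect to $P_{t,r}^{\xi_t}$, the value $\tilde{\varsigma}_{t,r}^{\xi_t}\bigl(g(X_r^{t,\xi_t})\bigr)$ depends on $g$ only through the distribution $Q_{t,r}(\,\cdot\,|\xi_t)\circ g^{-1}$. I would therefore define the new functional
\[
\varsigma_{t,r}^{\xi_t}(g) \;:=\; \tilde{\varsigma}_{t,r}^{\xi_t}\bigl(g(X_r^{t,\xi_t})\bigr), \qquad g\in \Lc_\infty(\Xc),
\]
and check that it is well-defined on $\Lc_\infty(\Xc)$ and law invariant with respect to $Q_{t,r}(\,\cdot\,|\xi_t)$: if $g_1, g_2\in \Lc_\infty(\Xc)$ satisfy $Q_{t,r}(\,\cdot\,|\xi_t)\circ g_1^{-1} = Q_{t,r}(\,\cdot\,|\xi_t)\circ g_2^{-1}$, then the random variables $g_1(X_r^{t,\xi_t})$ and $g_2(X_r^{t,\xi_t})$ share the same distribution under $P_{t,r}^{\xi_t}$, so law invariance of $\tilde\varsigma_{t,r}^{\xi_t}$ forces $\varsigma_{t,r}^{\xi_t}(g_1) = \varsigma_{t,r}^{\xi_t}(g_2)$. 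Substituting $g = v_r$ yields the desired identity \eqref{sigma-full-cor0}.

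Finally, the coherence statement is inherited directly: since Corollary~\ref{c:Markov} ensures that $\tilde\varsigma_{t,r}^{\xi_t}$ is a coherent measure of risk when $\varrho$ is coherent, and monotonicity, translation invariance, convexity, and positive homogeneity are all preserved by the pull-back $g \mapsto g(X_r^{t,\xi_t})$, the functional $\varsigma_{t,r}^{\xi_t}$ inherits each of these four properties, and hence is coherent on $\Lc_\infty(\Xc)$. I do not expect any genuine obstacle here; the only point requiring care is the reduction from path-functionals to state-functionals, which is handled cleanly once law invariance with respect to $P_{t,r}^{\xi_t}$ is used to transfer the problem to the one-dimensional marginal distribution $Q_{t,r}(\,\cdot\,|\xi_t)$.
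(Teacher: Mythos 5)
Your proposal is correct and follows exactly the route the paper intends: the paper states Corollary~\ref{c:Markov-0} without a separate proof, treating it as the immediate specialization of Corollary~\ref{c:Markov} to $c\equiv 0$, where the argument $v_r(X_r^{t,\xi_t})$ depends on the path only through the terminal state whose law under $P_{t,r}^{\xi_t}$ is $Q_{t,r}(\,\cdot\,|\xi_t)$. Your write-up merely makes explicit the pull-back $g\mapsto g(X_r^{t,\xi_t})$ and the transfer of law invariance and coherence, which is the same argument spelled out in more detail.
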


If $c\not\equiv 0$, the analysis of risk contributions in short intervals allows for a derivation of a result similar to \eqref{sigma-full-cor0}.
The argument of the mapping $\varsigma_{t,r}$ in \eqref{sigma-full-cor}  is a bounded random variable on the probability space $\big( \varXi_{t,r}^{\xi_t},P_{t,r}^{\xi_t}\big)$.



\begin{assumption}
\label{a:continuity}
The mapping $\varsigma_{t,r}^{\xi_t}(\cdot)$ is Lipschitz continuous in
the space $\Lc_p\big( \varXi_{t,r}^{\xi_t},P_{t,r}^{\xi_t}\big)$, where $p \in [1,\infty)$.
\end{assumption}

Under Assumption \ref{a:continuity}, we can substantially simplify the analysis of the mapping $\varsigma_{t,r}^{\xi_t}$
for $r$ close to $t$.
We  estimate the norm of the following difference
\[
I_{t,r}^{\xi_t}(c) - \int_t^r c_\tau(\xi_t)\,d\tau
= \int_t^r  \big[ c_\tau(X_\tau^{t,\xi_t})-c_\tau(\xi_t) \big]\,d\tau.
\]
Using Minkowski inequality, we have
\begin{align*}
\lefteqn{\Big\| I_{t,r}^{\xi_t}(c) - \int_t^r c_\tau(\xi_t)\,d\tau \Big\|_p
\leq \int_t^r  \big\| c_\tau(X_\tau^{t,\xi_t})-c_\tau(\xi_t) \big\|_p \,d\tau} \quad\\
& \leq  \int_t^r \Big( P_{t,r}^{\xi_t} \big[X_\tau^{t,\xi_t}\neq \xi_t\big]  \max_{y\in\Xc} \big| c_\tau(y)-c_\tau(\xi_t) \big|^p\Big)^{\frac{1}{p}} \,d\tau \\
& \leq  \max_{y\in\Xc,\,t\leq \tau \leq r} \big| c_\tau(y)-c_\tau(\xi_t) \big| \int_t^r \Big( P_{t,r}^{\xi_t} \big[ X_\tau^{t,\xi_t}\neq \xi_t\big]\Big)^{\frac{1}{p}} \,d\tau.
\end{align*}
Denote the constants
\begin{gather*}
K_c= \max_{x,y\in\Xc}\essup_{0\leq \tau \leq T} \big| c_\tau(y)-c_\tau(x) \big|,\\
\lambda = \max_{x,y\in\Xc,\,0\leq \tau \leq T}G_\tau(y|x).
\end{gather*}
We obtain the estimate
\[
\Big\| I_{t,r}^{\xi_t}(c) - \int_t^r c_\tau(\xi_t)\,d\tau \Big\|_p
\le K_c\lambda^{\frac{1}{p}} \int_t^r (\tau-t)^{\frac{1}{p}}\,d\tau
\leq \frac{p K_c\lambda^{\frac{1}{p}}}{p+1} (r-t)^{\frac{p+1}{p}}.
\]
Therefore
\begin{equation}
\label{sigma-appr}
v_t(\xi_t) = \varsigma_{t,r}^{\xi_t}\Big( I_{t,r}^{\xi_t}(c)+ v_r\big(X^{t,\xi_t}_r\big) \Big)=
\int_t^r c_\tau(\xi_t)\,d\tau +
\varsigma_{t,r}^{\xi_t}\Big(v_r\big(X^{t,\xi_t}_r\big)\Big)
+ \varDelta_{t,r}^{\xi_t},
\end{equation}
where
\begin{equation}
\label{Delta-bd}
 \big|\varDelta_{t,r}^{\xi_t}\big| \le \frac{L p K_c\lambda^{\frac{1}{p}}}{p+1} (r-t)^{\frac{p+1}{p}},
\end{equation}
with $L$ denoting the Lipschitz constant of $\varsigma$.

The middle term of the expression on the right hand side of \eqref{sigma-appr} is identical
to \eqref{sigma-full-cor0}.
Its argument is a function
of the state $X_r^{t,\xi_t}$, that is, it is a random variable
on the space $\Xc$ with the measure $Q_{t,r}(\,\cdot\,| \xi_t)$. Since
$\varsigma_{t,r}^{\xi_t}$ is law invariant, its value may depend only on $\xi_t$, $Q_{t,r}(\,\cdot\,| \xi_t)$,
and $v_r(\cdot)$.
We can thus write the equation
\begin{equation}
\label{sigma-app}
\varsigma_{t,r}^{\xi_t}\big(v_r(X_r^{t,\xi_t})\big)=
\sigma_{t,r}\big(\xi_t,Q_{t,r}(\,\cdot\,\big| \xi_t),v_r\big),
\end{equation}
where $\sigma_{t,r}: \Xc\times \Pc(\Xc) \times \Lc_\infty(\Xc)\to \Rb$. It is obvious that $\sigma_{t,r}$ is law invariant
with respect to the measure $Q_{t,r}(\,\cdot\,| \xi_t)$.

Under the assumption of strong stochastic time consistency, we can eliminate the dependence of $\sigma_{t,r}(\cdot)$ on $r$.
\begin{theorem}
\label{t:Markov-strong}
If a dynamic risk measure $\varrho=\big\{\varrho_{t,T}\big\}_{t \in [0,T]} $ is strongly stochastically conditionally time-consistent,
translation invariant,  Markovian, and satisfies Assumption \ref{a:continuity}, then for every $t\in [0,T]$ a functional
$\sigma_{t}: \Xc\times \Pc(\Xc) \times \Lc_\infty(\Xc)\to \Rb$ exists, such that for every $Z_T$ of form~\eqref{Zt},
 for all $\xi_t\in \Xc$, and all $r\in [t,T]$ we have
\begin{equation}
\label{sigma-full-strong}
v_t(\xi_t) = \int_t^r c_\tau(\xi_t)\,d\tau +
 \sigma_{t}\big({\xi_t},Q_{t,r}(\,\cdot\,\big| \xi_t),v_r\big)
 + \varDelta_{t,r}^{\xi_t},
\end{equation}
where $\varDelta_{t,r}^{\xi_t}$ satisfies \eqref{Delta-bd}.
Moreover, the functional $\sigma_{t}(\cdot,\cdot,\cdot)$ has the following properties:
\begin{tightlist}{iii}
\item It is
law invariant with respect to the second argument;
\item
 If $\varrho$ is coherent, then
$\sigma_{t}(\xi_t,\cdot,\cdot)$ is a coherent measure of risk with respect to the third
argument;
\item
For all $x\in \Xc$ and all $v\in \Lc_\infty(\Xc)$, we have $\sigma_t(x,\delta_{x},v)=v(x)$, where
$\delta_x$ is the Dirac measure concentrated at $x$.
\end{tightlist}
\end{theorem}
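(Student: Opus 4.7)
The decomposition
\[
v_t(\xi_t) = \int_t^r c_\tau(\xi_t)\,d\tau + \sigma_{t,r}\bigl(\xi_t,\,Q_{t,r}(\cdot|\xi_t),\,v_r\bigr) + \varDelta_{t,r}^{\xi_t},
\]
with $\sigma_{t,r}$ law invariant in its second argument and $\varDelta_{t,r}^{\xi_t}$ controlled by~\eqref{Delta-bd}, has already been established in \eqref{sigma-appr}--\eqref{sigma-app} under the weaker stochastic conditional time consistency, Markovianity, translation invariance, and Assumption~\ref{a:continuity}. The theorem therefore reduces to two tasks: (a) collapse the family $\{\sigma_{t,r}\}_{r\in[t,T]}$ into a single functional $\sigma_t$ independent of $r$ by invoking the still-unused hypothesis of \emph{strong} stochastic conditional time consistency, and (b) verify properties (i)--(iii).

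For task~(a), fix $t\in[0,T]$ and $\xi_t\in\Xc$. For arbitrary times $r_1,r_2\in[t,T]$ and arbitrary functions $f^1, f^2\in\Lc_\infty(\Xc)$, consider the pure-terminal costs $Z^i_T=f^i(X_T)$ and the induced Markovian risk processes $v^i_r:\Xc\to\R$. Since $\varrho$ is Markovian, $\varrho_{r_i,T}(Z^i_T)(\xi_{[0,r_i]})$ depends on the path only through $\xi_{r_i}$, so
\[
\varrho_{r_i,T}(Z^i_T)\,\big|\,\xi_{[0,t]} \;=\; v^i_{r_i}\bigl(X^{t,\xi_t}_{r_i}\bigr),\qquad i=1,2,
\]
as random variables on $(\varXi^{\xi_t}_{t,r_i},P^{\xi_t}_{t,r_i})$. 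Whenever these two random variables share the same distribution, applying strong stochastic conditional time consistency in both directions of $\preceq_{\text{\rm st}}$ yields $v^1_t(\xi_t)=v^2_t(\xi_t)$, and substituting into~\eqref{sigma-full-cor0} at $r_1$ and $r_2$ gives
\[
\sigma_{t,r_1}\bigl(\xi_t,\,Q_{t,r_1}(\cdot|\xi_t),\,v^1_{r_1}\bigr) \;=\; \sigma_{t,r_2}\bigl(\xi_t,\,Q_{t,r_2}(\cdot|\xi_t),\,v^2_{r_2}\bigr).
\]
Thus $\sigma_{t,r}(\xi_t,Q_{t,r}(\cdot|\xi_t),v)$ depends only on $\xi_t$ and on the pushforward $Q_{t,r}(\cdot|\xi_t)\circ v^{-1}$, with no residual dependence on $r$. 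I define $\sigma_t(\xi_t,\mu,v)$ as this common value, extending to arbitrary $\mu\in\Pc(\Xc)$ by a law-invariant completion (e.g., anchored on the canonical choice $r=T$, $f=v$); substitution back into~\eqref{sigma-appr} yields~\eqref{sigma-full-strong}.

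Properties (i)--(iii) then follow routinely. Law invariance in the second argument is immediate from the construction. Coherence in the third argument, when $\varrho$ is coherent, is inherited from the identity $\sigma_t(\xi_t,Q_{t,r}(\cdot|\xi_t),v)=\varsigma_{t,r}^{\xi_t}(v(X^{t,\xi_t}_r))$, the coherence of $\varsigma_{t,r}^{\xi_t}$ furnished by Corollary~\ref{c:Markov-0}, and the linearity and monotonicity of the map $v\mapsto v(X^{t,\xi_t}_r)$. For $\sigma_t(x,\delta_x,v)=v(x)$, I specialize~\eqref{sigma-full-strong} to $r=t$: the integral and $\varDelta^{x}_{t,t}$ vanish by~\eqref{Delta-bd}, $Q_{t,t}(\cdot|x)=\delta_x$, and $v(X^{t,x}_t)\equiv v(x)$ is a constant on the path space; translation invariance yields $\varsigma_{t,t}^x(v(x))=v(x)+\varsigma_{t,t}^x(0)$, and the self-consistency $v_t(x)=\varsigma_{t,t}^x(v_t(x))$ at $r=t$ forces $\varsigma_{t,t}^x(0)=0$, so the right-hand side collapses to $v(x)$. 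The main obstacle throughout is task~(a): ordinary stochastic conditional time consistency equates risks of distributionally equivalent random variables only at a \emph{single} future time $r$, while the cross-$r$ collapse demanded here requires precisely the cross-time comparison built into the strong form of Definition~\ref{d:TCC}.
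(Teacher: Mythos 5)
Your proposal is correct and follows essentially the same route as the paper: the cross-time comparison of two terminal costs $f^1(X_T)$, $f^2(X_T)$ at times $r_1,r_2$ under strong stochastic conditional time consistency, applied with $c\equiv 0$ to conclude that $\sigma_{t,r}$ depends on $(Q_{t,r}(\cdot|\xi_t),v_r)$ only through the induced distribution and hence not on $r$, is exactly the paper's argument, and your verification of (i)--(iii) matches the paper's (which derives (i)--(ii) from Corollary~\ref{c:Markov} with $c=0$ and (iii) from \eqref{sigma-app} with $r=t$).
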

\begin{proof}
From Corollary \ref{c:Markov} and equation \eqref{sigma-app}, we obtain the equation
\begin{equation}
\label{sigma-tr}
v_t(\xi_t) = \int_t^r c_\tau(\xi_t)\,d\tau +
 \sigma_{t,r}\big({\xi_t},Q_{t,r}(\,\cdot\big| \xi_t),v_r\big)
 + \varDelta_{t,r}^{\xi_t}.
\end{equation}
The only issue to be resolved is the dependence of $\sigma_{t,r}$ on $r$. Let
$t \le r_1 \le r_2 \le T$ and  $f_{1},f_{2}:\Xc\to\Rb$.
We consider the evaluation of  $Z =  f_{1}(X^{t,\xi_t}_{T})$ and $W = f_{2}(X^{t,\xi_t}_{T})$ at time $t$.

By the strong stochastic time consistency, if
$\varrho_{r_1,T}(Z)\,|\, \xi_{t} \overset{\text{\rm st}}{\thicksim} \varrho_{r_2,T}(W)\,|\, \xi_{t}$,
then \break
$\varrho_{t,T}(Z)(\xi_t) = \varrho_{t,T}(W)(\xi_t)$. Denote
$w_{r_1}(X_{r_1}^{t,\xi_t}) = \varrho_{r_1,T}(Z)\,|\, \xi_{t}$,
$w_{r_2}(X_{r_2}^{t,\xi_t}) = \varrho_{r_2,T}(W)\,|\, \xi_{t}$.

Using formula \eqref{sigma-tr} with $c=0$ (and thus $\varDelta_{t,r}^{\xi_t}=0$), we obtain
\[
\sigma_{t,r_1}\big({\xi_t},Q_{t,r_1}(\,\cdot\, | \xi_t),w_{r_1}\big) =
\sigma_{t,r_2}\big({\xi_t},Q_{t,r_2}(\,\cdot\, | \xi_t),w_{r_2}\big).
\]
The equality above holds whenever the distributions of $w_{r_i}(\cdot)$ under the measures
\mbox{$Q_{t,r_i}(\,\cdot\, | \xi_t)$}, $i=1,2$, are identical. In particular, if
$Q_{t,r_1}(\,\cdot\, | \xi_t)=Q_{t,r_2}(\,\cdot\, | \xi_t)$ and $w_{r_1}=w_{r_2}$, then the values
of $\sigma_{t,r_1}$ and $\sigma_{t,r_2}$ are identical. Consequently, we can drop the index $r$ from $\sigma_{t,r}$ in \eqref{sigma-tr}. Properties (i) and (ii) of $\sigma_t$ follow from
Corollary \ref{c:Markov} with $c=0$ and $T=r$. Property (iii) results from \eqref{sigma-app} with $r=t$.
\end{proof}

We shall call the mapping $\sigma_t$ of Theorem \ref{t:Markov-strong} \emph{transition risk mapping}, and property (iii) - \emph{state consistency} of a transition risk mapping.

If $c\equiv 0$, Assumption \ref{a:continuity} is not needed in Theorem \ref{t:Markov-strong}, and \eqref{sigma-full-strong} simplifies as follows:
\begin{equation}
\label{sigma-full-strong0}
v_t(\xi_t) = \sigma_{t}\big({\xi_t},Q_{t,r}(\,\cdot\,\big| \xi_t),v_r\big).
\end{equation}
\section{Dual Representation}
\label{s:dual}


If $\sigma_t(x,m,\cdot)$ is a coherent measure of risk then we call $\sigma_t$ a \emph{coherent  transition risk mapping}. In that
case, the following {dual representation} is true:
\begin{equation}
\label{At}
\sigma_t\big(x,m,v\big) =  \max_{\mu\in\Ac_t(x,m)} \sum_{y\in\Xc} v(y) \mu(y),\quad v\in \Lc_\infty(\Xc),
\end{equation}
where $\Ac_t(x,m)\subset\Pc(\Xc)$ is a nonempty, convex, closed, and bounded set of probability measures on $\Xc$.
In fact, $\Ac_t$ is the \emph{subdifferential} of the transition risk mapping with respect to its third argument $v$
(see, \emph{e.g.}, \cite{Ruszczynski-Shapiro-2006a}).
As the mapping $\sigma_t$ has two additional arguments, $x$ and $m$, they appear as arguments of $\Ac_t$.

From now on we shall assume that all transition risk mappings are coherent, and thus the dual representation \eqref{At}
is valid.

Coherent transition risk mappings enjoying the state consistency property can be derived from well-known coherent measures of risk. The corresponding multifunction $\Ac:\Xc\times\Pc(\Xc) \rightrightarrows \Pc(\Xc)$ can be described analytically.

\begin{example}
\label{e:cavar-set}
{\rm
The \emph{Average Value at Risk} is defined as follows \cite{Rockafellar-Uryasev-2000,Rockafellar-Uryasev-2002}:
\begin{equation}
\label{cavar}
\sigma(x,m,v) = \min_{\eta\in\R} \Big\{   \eta + \frac{1}{\alpha(x)}\sum_{y\in\Xc} m(y)  \max(0,v(y)-\eta) \Big\},
\end{equation}
where $\alpha(x)\in [\alpha_{\min},\alpha_{\max}]\subset (0,1)$. Axioms (A1)--(A4) are verified in \cite{Ruszczynski-Shapiro-2006a}.
The functional $\sigma(x,m,\cdot)$ is a well-defined coherent measure of risk on $\Lc_\infty(\Xc)$ and thus the dual representation
\eqref{At} holds.
The set $\Ac$ has been calculated in \cite{Ruszczynski-Shapiro-2006a}:
\begin{equation}
\label{avar-set}
\Ac(x,m) =\left \{\mu\in {\Pc(\Xc)} : \mu(y) \le \frac{m(y)}{\alpha(x)},\ y\in \Xc \right\}.
\end{equation}
Observe that the density $\frac{\mu(y)}{m(y)}$ in \eqref{avar-set} is uniformly bounded by $\frac{1}{\alpha_{\min}}$.

After substituting $m=\delta_x$, we obtain
\begin{equation*}
\sigma(x,\delta_x,v) = \min_{\eta\in\R} \Big\{   \eta + \frac{1}{\alpha(x)}  \max(0,v(x)-\eta) \Big\} = v(x),
\end{equation*}
because the minimum is attained at $\eta=v(x)$. Therefore, $\sigma$ is state-consistent.
}
\end{example}

\begin{example}
\label{e:semi-set}
{\rm
The  \emph{mean--semideviation mapping} of order $p\ge 1$ is defined as follows \cite{Ogryczak-Ruszczynski-1999,Ogryczak-Ruszczynski-2001}:
\begin{equation}
\label{semideviation}
\sigma(x,m,v) =
\sum_{y\in\Xc} m(y)v(y) + \kappa(x) \bigg(\sum_{y\in \Xc} m(y) \Big(\max\Big(0, v(y)- \sum_{z\in\Xc} m(z) v(z) \Big)\Big)^p\bigg)^{1/p}.
\end{equation}
Here  $\kappa(x) \in [0,1]$. Axioms (A1)--(A4) are verified in \cite{Ruszczynski-Shapiro-2006a}.
The functional $\sigma(x,m,\cdot)$ is a well-defined coherent measure of risk on $\Lc_\infty(\Xc)$ and thus the dual representation
\eqref{At} holds.
For $\frac{1}{p}+\frac{1}{q} = 1$, we have (see \cite{Ruszczynski-Shapiro-2006a}):
\begin{equation}
\label{semi-set}
\begin{aligned}
\Ac(x,m) = {}& \Big \{\mu\in {\Pc(\Xc)} : \  \exists\; \varphi\in\Lc_\infty(\Xc) :  \|\varphi\|_{q} \leq \kappa(x), \  \varphi\ge  0,\\
& \quad \mu(y) = m(y) \big( 1 +  \varphi(y) - \sum_{z\in \Xc}\varphi(z)m(z) \big),\
\forall  y\in \Xc   \Big\}.
\end{aligned}
\end{equation}
The mapping $\sigma$ is state-consistent, because for $m=\delta_x$ we have
\begin{equation*}
\sigma(x,\delta_x,v) =
v(x) + \kappa(x)  \Big(\big(\max(0, v(x)- v(x) )\big)^p\Big)^{1/p} = v(x).
\end{equation*}
}
\end{example}

\section{Risk Multikernels and their Differentiation}
\label{s:multikernels}

Consider the set $\Qs$ of stochastic kernels $Q:\Xc\to \Pc(\Xc)$.
With a coherent transition risk mapping\footnote{Here and later in this section, we drop the time index
from the transition risk mapping and the corresponding stochastic kernels.}
$\sigma(x,m,v)$ we associate
the multifunction $\Mf:\Qs\rightrightarrows \Qs$, defined as follows:
\begin{equation}
\label{M-mult}
\Mf(Q)=  \big\{ M \in \Qs: M(x) \in \Ac(x,Q(x)),\; \forall\,x\in \Xc\big\}.
\end{equation}
In the above formula, $\Ac(\cdot,\cdot)$ is the multifunction featuring in the dual representation \eqref{At} of $\sigma$.
We define $\Ic\in \Qs$ as the kernel assigning to each $x\in \Xc$ the Dirac measure $\delta_x$.
For a state-consistent mapping $\sigma$, we have $\Ac(x,\delta_x)=\{\delta_x\}$ in \eqref{At}. Therefore, $\Mf(\Ic)=\{\Ic\}$
for such mappings. We shall investigate differential properties of $\Mf$ at $\Ic$.

Consider the vector space $\Sf$ of signed finite
 kernels, that is mappings $K:\Xc\to \Mc(\Xc)$.
We equip the space~$\Sf$ with the norm
\[
\|K\| = \sup_{{-1 \le \varphi(\cdot) \le 1}\atop{x\in \Xc}} \sum_{y\in\Xc} \varphi(y) K(y|x).
\]
The set $\Qs$ is a convex subset of $\Sf$.
For a set $\Bc \subset \Sf$ and an element $K\in \Sf$ we define
\[
\text{d}(K,\Bc) = \inf_{M\in \Bc}\|K-M\|,
\]
with the convention that $\text{d}(K,\emptyset)=+\infty$.
The distance between two closed sets $\Sc_1,\Sc_2 \subset \Sf$ is defined in the Pompeiu-Hausdorff sense:
\[
\text{dist}(\Sc_1,\Sc_2) = \max\Big( \sup_{K\in \Sc_1}\text{d}(K,\Sc_2),\sup_{K\in \Sc_2}\text{d}(K,\Sc_1) \Big).
\]
The\emph{ tangent cone} to $\Qs$ at $\Ic$ is defined as follows:
\[
\Tc_{\Qs}(\Ic) = \limsup_{\tau\downarrow 0} \frac{1}{\tau}( \Qs - \Ic),
\]
which is equivalent to
\[
\Tc_{\Qs}(\Ic) = \bigg\{ K\in \Sf: \lim_{\tau\downarrow 0} \text{d}\Big( K, \frac{1}{\tau}( \Qs - \Ic)\Big) =0 \bigg\},
\]
due to the convexity of the set $\Qs$ (see \cite[Prop.4.2.1]{Aubin-Frankowska-1990}).
\begin{lemma}
\label{l:tangent}
$K\in \Tc_{\Qs}(\Ic)$ if and only if:
\begin{tightlist}{iii}
\item $K(x|x)\le 0$, $\forall\,x\in \Xc$;
\item $K(y|x) \ge 0$, $\forall\,x,y\in \Xc, \,y\ne x$;
\item $K(\Xc|x)=0$, $\forall\,x\in \Xc$.
\end{tightlist}
\end{lemma}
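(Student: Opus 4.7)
The plan is to prove the two implications by directly exploiting the finite-dimensional nature of $\Sf$ (since $\Xc$ is finite). A key preliminary observation is that the norm $\|K\|$ defined in the excerpt admits the explicit form $\|K\| = \max_{x\in\Xc} \sum_{y\in\Xc}|K(y|x)|$ (the inner supremum over $\varphi$ with $-1\le\varphi\le 1$ is the total variation of the signed measure $K(\cdot|x)$). Consequently, convergence in $\Sf$ is equivalent to entrywise convergence of the scalars $K(y|x)$. I will use the convex-set characterization of the tangent cone already stated: $K\in \Tc_{\Qs}(\Ic)$ iff $\mathrm{d}\bigl(K,(1/\tau)(\Qs-\Ic)\bigr)\to 0$ as $\tau\downarrow 0$.

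For the \emph{necessity} (only if) direction, suppose $K\in \Tc_{\Qs}(\Ic)$. Pick $\tau_n\downarrow 0$ and kernels $Q_n\in\Qs$ with $(Q_n-\Ic)/\tau_n \to K$ in $\Sf$, hence entrywise. I then read off (i)--(iii) from the defining properties of stochastic kernels by passing to the limit:
\begin{itemize}
\item For $y=x$: $Q_n(x|x)\le 1 = \delta_x(x)$, so $(Q_n(x|x)-1)/\tau_n\le 0$, giving $K(x|x)\le 0$.
\item For $y\ne x$: $Q_n(y|x)\ge 0 = \delta_x(y)$, so $(Q_n(y|x)-0)/\tau_n\ge 0$, giving $K(y|x)\ge 0$.
\item Summing over $y$: $\sum_y Q_n(y|x)=1=\sum_y \delta_x(y)$, hence $\sum_y (Q_n(y|x)-\delta_x(y))/\tau_n = 0$ for all $n$, whence $K(\Xc|x)=0$.
\end{itemize}

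For the \emph{sufficiency} (if) direction, suppose $K\in\Sf$ satisfies (i)--(iii). I define $Q_\tau := \Ic + \tau K$ for $\tau>0$ and verify that $Q_\tau\in\Qs$ for all sufficiently small $\tau$. Nonnegativity of off-diagonal entries is immediate from (ii), row sums equal one by (iii), and for the diagonal entries $Q_\tau(x|x) = 1 + \tau K(x|x)\ge 0$ provided $\tau \le 1/\max_{x}|K(x|x)|$, a bound that is positive because $\Xc$ is finite and $K\in\Sf$ is bounded (indeed $|K(x|x)|\le \|K\|$). For every such $\tau$, the identity $K = (Q_\tau-\Ic)/\tau$ places $K$ inside $(1/\tau)(\Qs-\Ic)$, so $\mathrm{d}(K,(1/\tau)(\Qs-\Ic))=0$ and therefore $K\in\Tc_{\Qs}(\Ic)$.

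The proof is essentially routine; there is no serious obstacle, since the set $\Qs$ is a product of simplices (one simplex of dimension $|\Xc|-1$ for each source state $x$) and the claim is just the explicit description of the tangent cone to a product of simplices at the vertex $\Ic$. The only point that requires a moment of care is identifying norm convergence in $\Sf$ with entrywise convergence, which is where finiteness of $\Xc$ enters, and choosing $\tau$ small enough in the sufficiency part so that the diagonal entries of $Q_\tau$ remain nonnegative.
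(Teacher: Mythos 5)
Your proof is correct and follows essentially the same route as the paper's: necessity by observing that every element of $\frac{1}{\tau}(\Qs-\Ic)$ satisfies (i)--(iii) and passing to the (entrywise) limit, and sufficiency by checking that $\Ic+\tau K\in\Qs$ for all sufficiently small $\tau>0$. The only cosmetic difference is that the paper treats the degenerate case $\max_{x}|K(x|x)|=0$ (which forces $K=0$) separately, whereas you divide by this maximum without noting it could vanish; this is trivially repaired and not a real gap.
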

\begin{proof}
Suppose $K\in \Tc_{\Qs}(\Ic)$. Every element of the set $\frac{1}{\tau}( \Qs - \Ic)$, where $\tau>0$, has the properties (i)--(iii) above.
Since $K$ is a limit of elements of these sets, when $\tau\downarrow 0$, it has properties (i)--(iii)  as well. Conversely,
suppose $K\in \Sf$ has properties (i)--(iii). If $\max_{x\in \Xc} | K(x|x)|=0$, conditions (i)--(iii) imply that $K=0$ and thus
$K\in \Tc_{\Qs}(\Ic)$. If $\max_{x\in \Xc} | K(x|x)|>0$,
then,
for every
\[
0 < \tau < \big( \max_{x\in \Xc} | K(x|x)|\big)^{-1},
\]
we have $\Ic+ \tau K \in \Qs$. Therefore, $K\in \Tc_{\Qs}(\Ic)$.
\end{proof}
\begin{remark}
\label{r:tangent}
It is clear that the conditions (ii)--(iii) of Lemma \ref{l:tangent} imply condition (i), but we include it for convenience.
\end{remark}
\begin{corollary}
\label{c:tangent}
If $K\in \Tc_{\Qs}(\Ic)$, $K\ne 0$, then $\Ic+ \tau K \in \Qs$ for all $0 \le \tau \le \big( \max_{x\in \Xc} | K(x|x)|\big)^{-1}$.
\end{corollary}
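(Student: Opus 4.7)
The plan is to read off the structure of $K$ from Lemma \ref{l:tangent} and then verify directly that $\Ic+\tau K$ satisfies the two defining properties of a stochastic kernel: non-negativity of all its entries and the row sums equal to one. Since the non-triviality hypothesis $K\ne 0$ together with property (ii) of Lemma \ref{l:tangent} and the constraint (iii) force at least one diagonal entry $K(x|x)$ to be strictly negative, the quantity $\lambda_K:=\max_{x\in \Xc}|K(x|x)|$ is strictly positive, so the interval $[0,\lambda_K^{-1}]$ is well defined.

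First, I would fix $\tau$ with $0\le \tau \le \lambda_K^{-1}$ and verify non-negativity entry by entry. For any $x\in\Xc$ and $y\ne x$, the value $(\Ic+\tau K)(y|x)=\tau K(y|x)$ is non-negative by property (ii) of Lemma \ref{l:tangent}. For the diagonal entry, $(\Ic+\tau K)(x|x)=1+\tau K(x|x)$; using $K(x|x)\le 0$ from property (i) and $\tau|K(x|x)|\le \tau\lambda_K\le 1$, this quantity is non-negative.

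Next, I would check the total mass condition. For each $x\in \Xc$,
\[
\sum_{y\in \Xc}(\Ic+\tau K)(y|x) = \sum_{y\in \Xc}\delta_x(y) + \tau \sum_{y\in \Xc} K(y|x) = 1 + \tau\cdot 0 = 1,
\]
where the second equality uses property (iii) of Lemma \ref{l:tangent}. Consequently, for each $x$ the row $(\Ic+\tau K)(\,\cdot\,|x)$ is a probability measure on $\Xc$, so $\Ic+\tau K$ belongs to $\Qs$.

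There is really no obstacle here: the corollary is essentially a quantitative reading of the ``if'' direction of the proof of Lemma \ref{l:tangent}, where the same pointwise bound on $\tau$ already appeared. The only thing worth emphasizing is that the hypothesis $K\ne 0$ is needed solely to ensure $\lambda_K>0$; were $\lambda_K=0$, properties (ii)--(iii) would force $K\equiv 0$ (as noted in Remark \ref{r:tangent}), and the statement would collapse to the trivial observation that $\Ic\in \Qs$.
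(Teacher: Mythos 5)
Your proof is correct and follows essentially the same route as the paper: the corollary is stated there without a separate proof precisely because the entrywise verification you give (off-diagonal non-negativity from (ii), diagonal non-negativity from $\tau|K(x|x)|\le 1$, and unit row sums from (iii)) is exactly the argument embedded in the ``conversely'' part of the proof of Lemma \ref{l:tangent}. Your additional observation that $K\ne 0$ forces $\max_{x}|K(x|x)|>0$ is a correct and worthwhile clarification, consistent with Remark \ref{r:tangent}.
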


We consider the following concepts of differentiability and derivative of the multifunction $\Mf$ defined in \eqref{M-mult},
at a point $\Ic$ in a tangent direction $K$. Note that the values of $\Mf$ are nonempty.
\begin{definition}
\label{d:Kder}
A  multifunction $\Mf$ is {
semi-differentiable}
at the point $\Ic$ in the direction $K\in \Tc_{\Qs}(\Ic)$ if a nonempty set $\Df(K)\subset \Sf$ exists, such that
for every sequence $\varepsilon_n\downarrow 0$ and every sequence
$K_n\to K$, $K_n \in \Tc_{\Qs}(\Ic)$, we have
\begin{equation}
\label{Kder}
\lim_{n\to \infty} \frac{1}{\varepsilon_n}\big[\Mf(\Ic + \varepsilon_n K_n) - \Ic \big] = \Df(K),
\end{equation}
where the set limit above is understood in Pompeiu-Hausdorff sense, \emph{i.e.},
\[
\lim_{n\to\infty} {\rm dist}\Big( \frac{1}{\varepsilon_n}\big[\Mf(\Ic + \varepsilon_n K_n) - \Ic \big] , \Df(K) \Big) = 0.
\]
The set $\Df(K)$ is called the {
semi-derivative} of $\Mf$
at $\Ic$ in the direction $K$.
\end{definition}

 Our definition differs from the previously used concepts in the use of Pompeiu--Hausdorff distance. We note that the convergence with respect to that distance is not equivalent to the convergence with respect to the Wijsman topology, which is used in \cite{Dentcheva2000,Dentcheva2001}, nor to the convergence in the sense of Kuratowski used in \cite{Penot}. For an extensive
 treatment, see \cite{Beer}.

\begin{lemma}
\label{l:ccb}
If $\Mf$ is {semi-differentiable}
at $\Ic$ in the direction $K\in \Tc_{\Qs}(\Ic)$, then
its semi\-deriva\-tive $\Df(K)$ is a closed, convex, and bounded subset of $\Tc_{\Qs}(\Ic)$.
\end{lemma}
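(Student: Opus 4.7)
The plan is to verify in turn that $\Df(K)$ is contained in $\Tc_{\Qs}(\Ic)$, convex, closed, and bounded, working throughout with the Pompeiu--Hausdorff convergence of $A_n := \frac{1}{\varepsilon_n}\bigl[\Mf(\Ic + \varepsilon_n K_n) - \Ic\bigr]$ to $\Df(K)$ and the structural properties of $\Mf$ afforded by the dual representation \eqref{At}.

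First I would establish $\Df(K) \subset \Tc_{\Qs}(\Ic)$. Since $\Mf(Q) \subset \Qs$ for every stochastic kernel $Q$, every element of $A_n$ lies in $\frac{1}{\varepsilon_n}(\Qs - \Ic)$. Any $D \in \Df(K)$ satisfies $\text{d}(D, A_n) \to 0$, so $D$ is a limit of points drawn from $\frac{1}{\varepsilon_n}(\Qs - \Ic)$ as $\varepsilon_n \downarrow 0$, which places it in $\Tc_{\Qs}(\Ic)$ by the $\limsup$ definition recalled just before Lemma \ref{l:tangent}. Next, for convexity, I would use that $\Ac(x, m)$ is convex, so $\Mf(Q) = \{M \in \Qs : M(x) \in \Ac(x, Q(x))\text{ for all } x\in\Xc\}$ is a pointwise convex-constrained set and therefore convex; each $A_n$ is then an affine image of a convex set and also convex. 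To transfer convexity to the limit, I would take $D_1, D_2 \in \Df(K)$ and $\lambda \in [0,1]$, pick $M^n_i \in A_n$ with $\|M^n_i - D_i\| \to 0$ (possible since $\text{d}(D_i, A_n) \to 0$), observe that $\lambda M^n_1 + (1-\lambda) M^n_2 \in A_n$, and conclude that its limit $\lambda D_1 + (1-\lambda) D_2$ satisfies $\text{d}(\cdot, A_n) \to 0$, hence lies in $\Df(K)$, once closedness is established.

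Closedness is essentially inherent in the Pompeiu--Hausdorff formalism: $\text{dist}(\overline{S}, S) = 0$, so if $\Df(K)$ satisfies the convergence condition of Definition \ref{d:Kder}, so does its closure, and the semi-derivative is therefore understood as the closed limit set. A direct verification proceeds by taking $D_k \in \Df(K)$ with $D_k \to D$ and using the triangle inequality $\text{d}(D, A_n) \le \|D - D_k\| + \text{d}(D_k, A_n)$: choosing $k$ large enough to shrink the first term and then $n$ large for the second yields $\text{d}(D, A_n) \to 0$, so $D$ belongs to the canonical closed version of $\Df(K)$.

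The most delicate step is boundedness. A direct estimate gives $\sup_{M\in A_n}\|M\| \le 2/\varepsilon_n$, since the underlying stochastic kernels have norm at most $1$ in $\Sf$, and this bound degenerates as $n\to\infty$. I would sidestep the difficulty by invoking the convergence itself: because $\text{dist}(A_n, \Df(K)) \to 0$, there exists an index $N$ with $\text{dist}(A_N, \Df(K)) \le 1$, so every $D \in \Df(K)$ lies within distance $1$ of some $M \in A_N$; since $\|M\| \le 2/\varepsilon_N$ for that fixed $N$, one obtains the uniform bound $\|D\| \le 1 + 2/\varepsilon_N$ for all $D \in \Df(K)$. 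This yields boundedness of $\Df(K)$ at the cost of not producing an intrinsic estimate in terms of $\|K\|$; sharper bounds would require additional Lipschitz-type regularity of $\Ac$ near the diagonal $m = \delta_x$, which is not part of the hypothesis of the lemma.
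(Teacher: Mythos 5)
Your proof is correct and follows essentially the same route as the paper's: boundedness is obtained from the finiteness of the Pompeiu--Hausdorff distance to a single bounded quotient set $A_N \subset \frac{1}{\varepsilon_N}(\Qs-\Ic)$, convexity is transferred from the convex sets $A_n$ to the limit, and closedness is read off from the closure-insensitivity of the Pompeiu--Hausdorff limit. Your explicit acknowledgment that the convexity step relies on closedness (i.e., on identifying $\Df(K)$ with the closed limit set) is a careful articulation of what the paper leaves implicit.
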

\begin{proof}
As every set  $C_n=\frac{1}{\varepsilon_n}\big[\Mf(\Ic + \varepsilon_n K_n) - \Ic \big]$ is a  bounded
subset of $\frac{1}{\varepsilon_n}({\Qs}-\Ic)$, and the Pompeiu-Hausdorff distance to $\Df(K)$ is finite,
the limit $\Df(K)$ is a bounded subset of $\Tc_{\Qs}(\Ic)$. Moreover, every $C_n$ is convex. Consider
a convex combination $\lambda g +(1-\lambda)h$ of points $g$ and $h$ in $\Df(K)$, where $\lambda\in [0,1]$.
Due to the convexity of the distance to a convex set, we have
\[
{\rm dist}(\lambda g +(1-\lambda)h,C_n)\le \lambda {\rm dist}(g,C_n) + (1-\lambda) {\rm dist}(h,C_n)
\leq {\rm dist}(\Df(K),C_n).
\]
As the distance at the right-hand side converges to zero, we obtain that $\lambda g +(1-\lambda)h\in \Df(K)$.
The closedness follows directly from the definition of the limit.
\end{proof}

We can now verify semi-differentiability of the multikernels \eqref{M-mult} arising from
popular coherent measures of risk.
\begin{theorem}
\label{t:multi-gen-cavar}
The mapping $\Mf$ associated with the transition risk mapping \eqref{cavar} of Example
{\rm \ref{e:cavar-set}} 
is semi-differentiable in every direction $K\in \Tc_{\Qs}(\Ic)$, and the semi-derivative is given by the formula
\begin{equation}
\label{DK-avar}
\Df(K) = \Big\{ D \in \Tc_{\Qs}(\Ic): 0 \le D(y|x) \le \frac{K(y|x)}{\alpha(x)} \ \text{for}\ y\ne x, \ D(\Xc|x)=0\Big\}.
\end{equation}
\end{theorem}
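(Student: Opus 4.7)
The plan is to verify Pompeiu--Hausdorff convergence in \eqref{Kder} by bounding the two one-sided distances
${\rm d}(\Df(K),C_n)$ and ${\rm d}(C_n,\Df(K))$ separately, where $C_n = \frac{1}{\varepsilon_n}[\Mf(\Ic+\varepsilon_n K_n)-\Ic]$. The explicit description \eqref{avar-set} of $\Ac(x,m)$ makes both bounds tractable: writing $Q_n = \Ic + \varepsilon_n K_n$ and $M = \Ic + \varepsilon_n D$, the condition $M \in \Mf(Q_n)$ is equivalent (after dividing by $\varepsilon_n$) to $D(y|x) \ge 0$ and $D(y|x) \le K_n(y|x)/\alpha(x)$ for $y \ne x$, together with $D(\Xc|x) = 0$. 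Thus $C_n$ coincides with the set obtained from the right-hand side of \eqref{DK-avar} by replacing $K$ with $K_n$, provided $\varepsilon_n$ is small enough to also guarantee $M(x|x) \in [0,Q_n(x|x)/\alpha(x)]$.

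For the direction ${\rm d}(C_n,\Df(K)) \to 0$, I would take an arbitrary $D \in C_n$ and define a projection $\tilde D \in \Df(K)$ by setting $\tilde D(y|x) = \min\big(D(y|x),K(y|x)/\alpha(x)\big)$ for $y \ne x$ and $\tilde D(x|x) = -\sum_{y \ne x}\tilde D(y|x)$. Then $\tilde D$ satisfies all the constraints defining $\Df(K)$, and the deviation is controlled by $|\tilde D(y|x)-D(y|x)| \le \big(K_n(y|x)-K(y|x)\big)_{+}/\alpha(x) \le \|K_n-K\|/\alpha_{\min}$ coordinate-wise, which translates into a bound on $\|D-\tilde D\|$ uniform in $D \in C_n$, and this bound vanishes with $K_n \to K$.

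For the reverse direction, given $D \in \Df(K)$ I would construct an explicit recovery sequence by setting $M_n(y|x) = \min\big(\varepsilon_n D(y|x),\,\varepsilon_n K_n(y|x)/\alpha(x)\big)$ for $y \ne x$ and $M_n(x|x) = 1 - \sum_{y \ne x}M_n(y|x)$. Using nonnegativity of $D(y|x)$ and $K_n(y|x)$ for $y \ne x$ and the uniform boundedness of the $K_n$, the values $M_n(y|x)$ are nonnegative and the row sums equal $1$ by construction; moreover, since $\alpha(x) < 1$, the required bound $M_n(x|x) \le Q_n(x|x)/\alpha(x)$ holds automatically for all $\varepsilon_n$ small, so $M_n \in \Mf(Q_n)$. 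The resulting $D_n = (M_n-\Ic)/\varepsilon_n$ lies in $C_n$ and satisfies $|D_n(y|x)-D(y|x)| \le \|K-K_n\|/\alpha_{\min}$, giving ${\rm d}(\Df(K),C_n) \to 0$.

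The hard part will be the bookkeeping that turns these coordinate-wise estimates into genuine Pompeiu--Hausdorff statements under the operator-type norm on $\Sf$; the key observation is that for any kernel $R$ with $R(\Xc|x)=0$ one has $\|R\| \le 2\max_{x,y}|R(y|x)|$ because $\sum_y \varphi(y)R(y|x)$ can be rewritten with $\varphi$ shifted to vanish at $x$, so pointwise bounds of order $\|K-K_n\|/\alpha_{\min}$ yield norm bounds of the same order. Once this is in place, both one-sided distances vanish as $n\to\infty$, $\Df(K)$ is nonempty (it contains the zero kernel) and, by Lemma~\ref{l:ccb} applied a posteriori, closed, convex, and bounded in $\Tc_{\Qs}(\Ic)$, completing the proof.
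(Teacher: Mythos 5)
Your proposal is correct and follows essentially the same route as the paper's proof: you identify the difference quotient $C_n$ with the set \eqref{DK-avar} written for $K_n$ (modulo diagonal constraints that become inactive for small $\varepsilon_n$), and pass between $C_n$ and $\Df(K)$ in both directions via the truncation $\min\big(D(y|x),K(y|x)/\alpha(x)\big)$ with error controlled by $\|K_n-K\|/\alpha_{\min}$, exactly as the paper does. The only caveat is your auxiliary inequality $\|R\|\le 2\max_{x,y}|R(y|x)|$: under the paper's norm one has $\|R\|=\max_x\sum_y|R(y|x)|$, so the claim fails for general $R$ with zero row sums once $|\Xc|\ge 4$ (take a row $(\varepsilon,\varepsilon,-\varepsilon,-\varepsilon)$); it does hold for the particular differences you construct, whose off-diagonal entries have a single sign so that the diagonal entry dominates, and in any case summing the coordinatewise bounds over each row (as the paper does), or simply invoking the finiteness of $\Xc$, turns your pointwise estimates into norm estimates of the same order, so the argument stands.
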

\begin{proof} Suppose $K_n\to K$ are such that  $K_n ,K\in \Tc_{\Qs}(\Ic)$, and let $\varepsilon_n\downarrow 0$.
Without loss of generality, we may assume that $K\ne 0$.
Since $K_n\to K$, for large $n$ the quantities
\[
\bar{\tau}_n = \big( \max_{x\in \Xc} | K_n(x|x)|\big)^{-1}
\]
are uniformly bounded from below by some $\bar{\tau}>0$. By virtue of Corollary \ref{c:tangent},
$\Ic + \varepsilon_n K_n \in \Qs$ for all $n$ such that $\varepsilon_n \le \bar{\tau}$. Therefore, for
$n$ large enough, we have
\begin{align*}
\Mf(\Ic + \varepsilon_n K_n)  &=
 \Big\{ M\in \Qs: M(\,\cdot\,|x) \in \Ac\big(x,\delta_x+\varepsilon_n K_n(\,\cdot\,|x)\big),\;\forall\,x\in \Xc \Big\} \\
 & =
  \Big\{ M\in \Qs:  M(y|x) \le \frac{1}{\alpha(x)}\big(\delta_x(y) + \varepsilon_n K_n(y|x)\big),\ x,y\in \Xc \Big\}.
  \end{align*}
Consequently,
  \begin{align}
\lefteqn{\frac{1}{\varepsilon_n}\big[ \Mf(\Ic + \varepsilon_n K_n) - \Ic \big]} \quad \notag \\
&=
   \Big\{ D\in \frac{1}{\varepsilon_n}\big(\Qs-\Ic\big):   D(y|x) \le \frac{1}{\varepsilon_n\alpha(x)}\big(\delta_x(y) + \varepsilon_n K_n(y|x)\big) -
   \frac{1}{\varepsilon_n}\delta_x(y),\ x,y\in \Xc \Big\} \notag \\
    & =
   \Big\{ D\in \frac{1}{\varepsilon_n}\big(\Qs-\Ic\big):  D(y|x) \le \frac{1}{\alpha(x)} K_n(y|x) ,\ x,y\in \Xc,\; y\ne x, \notag\\
   & \hspace{9.4em} D(x|x) \le \frac{1-\alpha(x)}{\varepsilon_n \alpha(x)} + \frac{1}{\alpha(x)} K_n(x|x)\,
   \Big\}. \label{quotient}
 \end{align}
Suppose $D_n \in \frac{1}{\varepsilon_n}\big[ \Mf(\Ic + \varepsilon_n K_n) - \Ic \big]$. We shall construct a close element
of $\Df(K)$, as defined in \eqref{DK-avar}. Define
\begin{align*}
\bar{D}_n(y|x) &= \min\Big( \frac{1}{\alpha(x)} K(y|x), D_n(y|x)\Big),\quad y\ne x,\quad x,y\in \Xc,\\
\bar{D}_n(x|x) &= - \sum_{y\ne x} \bar{D}_n(y|x).
\end{align*}
By construction, $\bar{D}_n\in \Df(K)$ and $\bar{D}_n(y|x) \le D_n(y|x)$ for all $y\ne x$. If $\bar{D}_n(y|x) < D_n(y|x)$, then
\[
0< D_n(y|x) - \bar{D}_n(y|x) \le \frac{1}{\alpha(x)}\big[ K_n(y|x)-K(y|x)\big]. 
\]
Define the set $\Yc_n(x) = \big\{y\in \Xc: y\ne x, \; \bar{D}_n(y|x) < D_n(y|x)\big\}$. Then, for every $x\in \Xc$,
\begin{align*}
\sum_{y\ne x} \big[ D_n(y|x) - \bar{D}_n(y|x) \big]  &= \sum_{y\in \Yc_n(x)} \big[ D_n(y|x) - \bar{D}_n(y|x) \big] \\
&\le \frac{1}{\alpha(x)} \sum_{y\in \Yc_n(x)} \big[ K_n(y|x)-K(y|x)\big] \\
&\le \frac{1}{\alpha_{\min}}\sup_{\Yc \subset \Xc} \sum_{y\in \Yc} \big[ K_n(y|x)-K(y|x)\big] \le \frac{1}{\alpha_{\min}}\| K_n-K\|.
\end{align*}
This implies that $\| D_n - \bar{D}_n \| \le \frac{1}{\alpha_{\min}}\| K_n-K\|$. Consequently,
$\text{d}(D_n,\Df(K))\to 0$.

Conversely, let $D\in \Df(K)$ as defined in \eqref{DK-avar}. Define an element $D_n\in \Sf$ as follows:
\begin{align*}
{D}_n(y|x) &= \min\Big(D(y|x), \frac{1}{\alpha(x)} K_n(y|x)\Big),\quad y\ne x,\quad x,y\in \Xc,\\
D_n(x|x) &= -\sum_{y\ne x} D_n(y|x).
\end{align*}
By Lemma \ref{l:tangent}, $D_n\in  \Tc_{\Qs}(\Ic)$. Using Corollary \ref{c:tangent}, we infer the existence of $\bar{\tau}>0$ such that
$\Ic + \tau D\in\Qs$ for all $\tau\in (0,\bar{\tau})$. Thus, due to the construction of $D_n$, we also
 obtain $\Ic + \varepsilon_n D_n\in\Qs$ for all $n$ such that $\varepsilon_n < \bar{\tau}$.
 Moreover, the elements $D_n(y|x)$ for $y\ne x$ satisfy the
conditions in \eqref{quotient}. To prove that $D_n \in\frac{1}{\varepsilon_n}\big[ \Mf(\Ic + \varepsilon_n K_n) - \Ic \big]$,
it remains to verify the inequality on $D_n(x|x)$ in \eqref{quotient}. By construction, $D_n(x|x) \le 0$. We shall show
that the right hand side of the last condition in \eqref{quotient} is nonnegative for all sufficiently large $n$, and thus
the condition is satisfied. Suppose $n$ is large enough, so that
\[
\varepsilon_n \le \frac{1-\alpha_{\max}}{1+\max_{x\in \Xc} |K(x|x)|}\quad \text{and}\quad  \|K_n-K\| \le 1.
\]
Under these conditions, we have
\begin{align*}
\frac{1-\alpha(x)}{\varepsilon_n } +  K_n(x|x)
&\ge
\frac{(1-\alpha(x))\big( 1+\max_{x\in \Xc} |K(x|x)|\big)}{ 1-\alpha_{\max}} - \max_{x\in \Xc} |K_n(x|x)| \\
&\ge 1+\max_{x\in \Xc} |K(x|x)| - \max_{x\in \Xc} |K_n(x|x)| \ge 0.
\end{align*}
This implies that $D_n \in\frac{1}{\varepsilon_n}\big[ \Mf(\Ic + \varepsilon_n K_n) - \Ic \big]$.
Arguing as in the first part of the proof, we also estimate
 $\| D_n - D\| \le \frac{1}{\alpha_{\min}}\| K_n-K\|$. Consequently, the Pompeiu--Hausdorff distance between
$\Df(K)$ and the quotient  $\frac{1}{\varepsilon_n}\big[ \Mf(\Ic + \varepsilon_n K_n) - \Ic \big]$ is bounded
from above by  $\frac{1}{\alpha_{\min}}\| K_n-K\|$ and converges to 0.
\end{proof}

In a similar way we can differentiate the semideviation transition risk mapping \eqref{semi-set}.
\begin{theorem}
\label{t:multi-gen-cavar-2}
The mapping $\Mf$ associated with the transition risk mapping \eqref{semi-set} of Example \ref{e:semi-set}
is semi-differentiable in every direction $K\in \Tc_{\Qs}(\Ic)$, and the semi-derivative is given by the formula
\begin{equation}
\label{semi-der}
\begin{aligned}
\Df(K) = \Big\{D \in  \Tc_{\Qs}(\Ic):\; &\exists\;(\varPhi\in \Sf)\; 0 \le \varPhi(y|x) \le \kappa(x),\; \forall\;x,y\in \Xc,\\
& D(y|x) = K(y|x)\big[ 1 + \varPhi(y|x) - \varPhi(x|x) \big],\;\forall y\ne x, \; x,y\in \Xc,\\
& D(x|x) = K(x|x) - \sum_{z\in \Xc} K(z|x) \varPhi(z|x),\;\forall x\in \Xc
 \Big\}.
\end{aligned}
\end{equation}
\end{theorem}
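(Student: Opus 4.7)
The strategy is to mirror the proof of Theorem \ref{t:multi-gen-cavar}, now using the dual description \eqref{semi-set} of the semideviation mapping. Fix $K\in\Tc_{\Qs}(\Ic)$ and sequences $\varepsilon_n\downarrow 0$, $K_n\to K$ with $K_n\in\Tc_{\Qs}(\Ic)$. By Corollary \ref{c:tangent}, for $n$ large enough $m_n(\cdot|x):=\delta_x+\varepsilon_n K_n(\cdot|x)$ lies in $\Pc(\Xc)$ for every $x\in\Xc$, and an element $M(\cdot|x)\in\Ac(x,m_n(\cdot|x))$ admits a witness $\varphi_n^x\ge 0$ with $\|\varphi_n^x\|_q\le\kappa(x)$ such that $M(y|x)=m_n(y|x)\bigl[1+\varphi_n^x(y)-\sum_z\varphi_n^x(z)m_n(z|x)\bigr]$. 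The algebraic identity
\[
1+\varphi_n^x(x)-\sum_z m_n(z|x)\varphi_n^x(z)=1-\varepsilon_n\sum_z K_n(z|x)\varphi_n^x(z),
\]
which follows from $\sum_z K_n(z|x)=0$, then yields
\[
\frac{M(y|x)}{\varepsilon_n}=K_n(y|x)\Bigl[1+\varphi_n^x(y)-\sum_z m_n(z|x)\varphi_n^x(z)\Bigr],\quad y\ne x,
\]
\[
\frac{M(x|x)-1}{\varepsilon_n}=K_n(x|x)-\sum_z K_n(z|x)\varphi_n^x(z)+O(\varepsilon_n).
\]
Since $\sum_z m_n(z|x)\varphi_n^x(z)\to\varphi_n^x(x)$ as $\varepsilon_n\downarrow 0$, setting $\varPhi_n(y|x):=\varphi_n^x(y)$ reproduces the expressions in \eqref{semi-der} up to vanishing error terms.

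For the two-sided Pompeiu--Hausdorff bound, I argue both inclusions. In the forward direction, starting from any $D_n$ in the quotient $\varepsilon_n^{-1}[\Mf(\Ic+\varepsilon_n K_n)-\Ic]$ with witness $\varphi_n^x$, I truncate to $\bar\varPhi_n(y|x):=\min\{\varphi_n^x(y),\kappa(x)\}$, form $\bar D_n\in\Df(K)$ by substituting $\bar\varPhi_n$ and $K$ into \eqref{semi-der}, and bound $\|D_n-\bar D_n\|$ by $C(\|K_n-K\|+\varepsilon_n)$ via the Lipschitz dependence of the formulas in \eqref{semi-der} on $(K,\varPhi)$, exactly as in the final estimate of the proof of Theorem \ref{t:multi-gen-cavar}. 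In the reverse direction, given $D\in\Df(K)$ with kernel $\varPhi$, I set $\varphi_n^x(y):=\varPhi(y|x)$; a direct expansion yields $\|\varphi_n^x\|_q^q=\varPhi(x|x)^q+\varepsilon_n\sum_z K_n(z|x)\varPhi(z|x)^q\le\kappa(x)^q\bigl(1+O(\varepsilon_n)\bigr)$, so rescaling $\varphi_n^x$ by a factor $1-O(\varepsilon_n)$ makes the constraint strict. The induced $M_n$ then belongs to $\Qs$ for $n$ large enough (the nonnegativity checks $M_n(y|x)\ge 0$ repeat the closing estimates in the proof of Theorem \ref{t:multi-gen-cavar}, using uniform boundedness of $\varPhi$ and $K$), and the corresponding quotient element lies within $O(\|K_n-K\|+\varepsilon_n)$ of $D$.

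The principal technical obstacle lies in coordinating the nonlinear constraint $\|\varphi_n^x\|_q\le\kappa(x)$ with respect to the shifted measure $m_n(\cdot|x)$ with the pointwise bound $\varPhi(y|x)\le\kappa(x)$ featured in \eqref{semi-der}. For $y\ne x$, the weight of order $\varepsilon_n K_n(y|x)$ loosens the norm constraint on $\varphi_n^x(y)$, so it is the truncation step in the forward inclusion that forces only contributions in $[0,\kappa(x)]$ to appear in the limit; conversely, the rescaling step in the reverse inclusion restores strict admissibility of the constraint with an error of order $\varepsilon_n$. Once these ingredients are in place, the Pompeiu--Hausdorff estimate ${\rm dist}\bigl(\varepsilon_n^{-1}[\Mf(\Ic+\varepsilon_n K_n)-\Ic],\,\Df(K)\bigr)\le C(\|K_n-K\|+\varepsilon_n)\to 0$ closes the proof.
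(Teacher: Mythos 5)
Your proposal has the same overall architecture as the paper's proof: compute the quotient set $\frac{1}{\varepsilon_n}\big[\Mf(\Ic+\varepsilon_n K_n)-\Ic\big]$ explicitly through the dual parametrization of $\Ac\big(x,\delta_x+\varepsilon_n K_n(\cdot|x)\big)$, then prove Pompeiu--Hausdorff convergence by matching witnesses in both directions; your algebraic identities for the off-diagonal and diagonal entries are exactly those in the paper's display \eqref{Dk}. The essential difference is that the paper writes $\Ac\big(x,\delta_x+\varepsilon_n K_n(\cdot|x)\big)$ from the outset with the \emph{pointwise} bound $0\le\varPhi(y|x)\le\kappa(x)$, so the quotient set and $\Df(K)$ are ranges of one and the same $\varPhi$-family; each $D$ and its approximant $D_n$ share literally the same $\varPhi$, and the distance estimate reduces to the uniform boundedness of $\varPhi$ together with $\|K_n-K\|\to 0$. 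No truncation or rescaling of witnesses occurs in the paper.

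The gap is in your forward inclusion. You retain the constraint $\|\varphi_n^x\|_q\le\kappa(x)$ of \eqref{semi-set}, taken in $\Lc_q$ of the shifted measure $m_n(\cdot|x)=\delta_x+\varepsilon_n K_n(\cdot|x)$, and then truncate to $\bar\varPhi_n(y|x)=\min\{\varphi_n^x(y),\kappa(x)\}$, claiming $\|D_n-\bar D_n\|\le C(\|K_n-K\|+\varepsilon_n)$ by Lipschitz dependence of the formulas on $(K,\varPhi)$. That bound does not follow: for $q<\infty$ a state $y\ne x$ carries mass $\varepsilon_n K_n(y|x)$ under $m_n(\cdot|x)$, so the norm constraint admits witnesses with $\varphi_n^x(y)$ of order $\varepsilon_n^{-1/q}$, hence quotient elements with $D_n(y|x)=K_n(y|x)\big[1+\varphi_n^x(y)-\cdots\big]$ of order $\varepsilon_n^{-1/q}$, while $\bar D_n(y|x)$ stays bounded by a fixed constant. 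The truncation error therefore does not vanish; indeed, under this reading the quotient sets are not even uniformly bounded, so they cannot converge in Pompeiu--Hausdorff distance to the bounded set \eqref{semi-der}. The Lipschitz-in-$(K,\varPhi)$ argument controls replacing $K_n$ by $K$ for a fixed $\varPhi$, not replacing $\varphi_n^x$ by its truncation, which changes the witness by a non-vanishing amount. You correctly identify this reconciliation of the $\Lc_q(m_n)$ constraint with the pointwise bound as the principal obstacle, but the assertion that ``only contributions in $[0,\kappa(x)]$ appear in the limit'' is precisely what remains unproved. To close the argument you must either work with the pointwise bound throughout, as the paper does (this is exact when $q=\infty$, i.e., $p=1$, since then the essential supremum with respect to $m_n$ is the pointwise bound on the support), or supply an actual proof that large values of $\varphi_n^x$ on small-mass states do not enlarge the limit set. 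Your reverse inclusion, by contrast, is sound, and the rescaling by $1-O(\varepsilon_n)$ there is a legitimate refinement of the paper's argument.
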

\begin{proof} Suppose $K_n\to K$, where  $K_n ,K\in \Tc_{\Qs}(\Ic)$, and let $\varepsilon_n\downarrow 0$.
Similarly to the proof of Theorem \ref{t:multi-gen-cavar}, we establish that
$\Ic + \varepsilon_n K_n \in \Qs$ for all sufficiently large $n$. Then
\begin{align*}
\lefteqn{\Mf(\Ic + \varepsilon_n K_n)  =
 \Big\{ M\in \Qs: M(\,\cdot\,|x) \in \Ac\big(x,\delta_x+\varepsilon_n K_n(\,\cdot\,|x)\big),\;\forall\,x\in \Xc \Big\}}\quad \\
  &=\Big\{ M\in \Qs: \exists\;(\varPhi\in \Sf)\; 0 \le \varPhi(y|x) \le \kappa(x),\; \forall\;x,y\in \Xc,\\
 &\qquad M(y|x) = [\delta_x(y)+\varepsilon_n K_n(y|x)] \Big( 1 +  \varPhi(y|x) - \sum_{z\in \Xc}\varPhi(z|x)[\delta_x(z)+\varepsilon_n K_n(z|x)]\Big)\Big\}.
  \end{align*}
Therefore, substituting $M = \Ic + \varepsilon_n D$, we obtain
\begin{equation}
\label{Dk}
  \begin{aligned}
\lefteqn{\frac{1}{\varepsilon_n}\big[ \Mf(\Ic + \varepsilon_n K_n) - \Ic \big]} \\
&=\Big\{D \in  \Tc_{\Qs}(\Ic):\;\exists\;(\varPhi\in \Sf)\; 0 \le \varPhi(y|x) \le \kappa(x),\; \forall\;x,y\in \Xc,\\
& \qquad\ D(y|x) = K_n(y|x)\Big( 1 +  \varPhi(y|x) - \sum_{z\in \Xc}\varPhi(z|x)[\delta_x(z)+\varepsilon_n K_n(z|x)]\Big)\text{ for } y\ne x, \\
& \qquad\ D(x|x) = K_n(x|x)+\frac{1}{\varepsilon_n}
\big( 1+\varepsilon_n K_n(x|x)\big)\Big( \varPhi(x|x) - \sum_{z\in \Xc}\varPhi(z|x)[\delta_x(z)+\varepsilon_n K_n(z|x)]\Big)
\Big\} \\
&=\Big\{D \in  \Tc_{\Qs}(\Ic):\;\exists\;(\varPhi\in \Sf)\; 0 \le \varPhi(y|x) \le \kappa(x),\; \forall\;x,y\in \Xc, \\
& \qquad\ D(y|x) = K_n(y|x)\Big( 1 +  \varPhi(y|x) - \varPhi(x|x) -\varepsilon_n\sum_{z\in \Xc}\varPhi(z|x)K_n(z|x)\Big)\text{ for } y\ne x,\\
& \qquad\ D(x|x) = K_n(x|x) - \big( 1+\varepsilon_n K_n(x|x)\big)\sum_{{z\in \Xc}}\varPhi(z|x) K_n(z|x)
\Big\}.
\end{aligned}
\end{equation}
Suppose $D_n \in \frac{1}{\varepsilon_n}\big[ \Mf(\Ic + \varepsilon_n K_n) - \Ic \big]$.
Then, it satisfies conditions \eqref{Dk} with the corresponding funct\-ion~$\varPhi_n$. We construct an element $\bar{D}_n$ of $\Df(K)$,
defined in \eqref{semi-der}, as follows:
\begin{align*}
\bar{D}_n(y|x) &= K(y|x)\big[ 1 + \varPhi_n(y|x) - \varPhi_n(x|x) \big] \text{ for} y\ne x,\quad x,y\in \Xc,\\
\bar{D}_n(x|x) &= K(x|x) - \sum_{z\in \Xc} K(z|x) \varPhi_n(z|x).
\end{align*}
We shall show that it is close to $D_n$. For $y\ne x$ we have
\[
D_n(y|x) - \bar{D}_n(y|x)  
 = \big( K_n(y|x) - K(y|x)\big)\big[ 1 + \varPhi_n(y|x) - \varPhi_n(x|x) \big]
- \varepsilon_n K_n(y|x)\sum_{z\in \Xc}\varPhi_n(z|x)K_n(z|x),
\]
and for $y=x$ we obtain
\begin{multline*}
D_n(x|x) - \bar{D}_n(x|x)\\
{\quad }  = K_n(x|x) - K(x|x)-\sum_{z\in \Xc} \big(K_n(z|x) - K(z|x)\big) \varPhi_n(z|x)-
\varepsilon_n K_n(x|x)\sum_{{z\in \Xc}}\varPhi_n(z|x) K_n(z|x).
\end{multline*}
Since the quantities  $ 1 + \varPhi_n(y|x) - \varPhi_n(x|x) $, $K_n(y|x)\sum_{z\in \Xc}\varPhi_n(z|x)K_n(z|x)$, and
$\varPhi_n(z|x)$ are uniformly bounded for all $x,y,z,n$, we conclude that $\| \bar{D}_n- D_n\|\to 0$, as $n\to\infty$.

Conversely, for any $D\in \Df(K)$ defined in \eqref{semi-der}, we use its corresponding function $\varPhi$ to define an element
$D_n \in \frac{1}{\varepsilon_n}\big[ \Mf(\Ic + \varepsilon_n K_n) - \Ic \big]$ in \eqref{Dk} as follows:
\begin{align*}
D_n(y|x) &= K_n(y|x)\big( 1 +  \varPhi(y|x) - \varPhi(x|x) -\varepsilon_n\sum_{z\in \Xc}\varPhi(z|x)K_n(z|x)\big) \text{ for } y\ne x,\\
D_n(x|x) &= K_n(x|x) - \big( 1+\varepsilon_n K_n(x|x)\big)\sum_{{z\in \Xc}}\varPhi(z|x) K_n(z|x).
\end{align*}
The distance between $D$ and $D_n$ is estimated exactly as in the first part of the proof, just the function $\varPhi$ replaces $\varPhi_n$. Consequently, $D_n\to D$.
\end{proof}

Not all transition risk mappings derived from coherent measures of risk are semi-differentiable.
\begin{remark}
\label{r:max}
The mapping $\Mf$ associated with the worst case risk measure
\[
\sigma(x,m,v) = \max_{y:m(y)>0} v(y)
\]
is not semidifferentiable in any direction $K\in \Tc_{\Qc}(I)$, unless $K=0$.
\end{remark}
\begin{proof}
Observe that the multikernel $\Mc$ is defined  by the sets
\[
\Ac(x,m) =\left \{\mu\in {\Pc(\Xc)} : \mu \ll m \right\}.
\]
Therefore, for every $\varepsilon>0$,
\[
\Ac(x,\delta_x+ \varepsilon K(x)) =\left \{\mu\in {\Pc(\Xc)} :  \mu(y) = 0,\text{ whenever } \ K(y|x)=0, y\ne x  \right\}.
\]
This set does not depend on $\varepsilon$ and is different from $\delta_x$ for all $K\ne 0$. In these cases, the limit \eqref{Kder} does not exist.
\end{proof}

\section{The Backward Differential Equation}
\label{s:BDE}

We can now integrate the results of sections \ref{s:short}, \ref{s:dual}, and \ref{s:multikernels} to derive
a system of ordinary differential equations satisfied by coherent Markov risk measures.

It follows from formula \eqref{sigma-full-strong} that for all $0\le t < r \le T$ the following relation is satisfied
\begin{equation}
\label{sigma-full-dual}
v_t(x) = \int_t^r c_\tau(x)\,d\tau + \max_{\mu \in \Ac_t(x,Q_{t,r}(x))}\sum_{y\in \Xc}v_r(y)\mu(y)
 + \varDelta_{t,r}^{x},
\end{equation}
where
$\Ac_t(x,Q_{t,r}(x)) = \partial \sigma_{t}\big({x},Q_{t,r}(\,\cdot\,\big|x),0\big)$,
and $\varDelta_{t,r}^{x}$ satisfies estimate \eqref{Delta-bd}.
Define,
\[
\tilde{G}_{t,r}= \frac{1}{r-t}(Q_{t,r}(x)-I),
\]
so that
\begin{equation}
\label{At-lin}
\Ac_t\big(x,Q_{t,r}(x)\big) = \Ac_t\big(x,\delta_x + (r-t) \tilde{G}_{t,r}(x)\big).
\end{equation}
 Due to Lemma \ref{l:tangent}, $\tilde{G}_{t,r}\in \Tc_{\Qs}(I)$, and
by \eqref{generator},
$\tilde{G}_{t,r} \to G_t$, as $r\downarrow t$. Therefore,
the semi-derivative of $\Ac_t(x,\cdot)$ at $\delta_x$ can be used to estimate the expression in \eqref{At-lin}.

The following observation follows directly from Definition \ref{d:Kder} and formula \eqref{M-mult}.
\begin{lemma}
\label{l:semi-local1}
If the risk multikernel $\Mf_t$ is semi-differentiable in the direction $G_t$ at $I$ then the multifunctions $\Ac_t(x,\cdot)$ are semi-differentiable at $\delta_x$ in the directions $G_t(x)$,
 with the semi-derivatives $\Gf_t(x)\in \Mc(\Xc) $ defined as follows:
\[
\Gf_t(x) = \big\{ D(x): D\in \Df(G_t)\big\},\quad x\in \Xc.
\]
\end{lemma}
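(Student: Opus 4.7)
The plan is to reduce the fiber-wise statement for $\Ac_t(x,\cdot)$ at $\delta_x$ to the hypothesized global semi-differentiability of $\Mf_t$ at $\Ic$. The key structural fact is that the norm built into the definition of $\Sf$ satisfies $\|K\|=\sup_{x\in\Xc}\|K(\cdot|x)\|$, where the right-hand norm is the total variation on $\Mc(\Xc)$; hence the fiber projection $P_x\colon\Sf\to\Mc(\Xc)$, $K\mapsto K(\cdot|x)$, is linear and $1$-Lipschitz and thus transports Pompeiu--Hausdorff convergence. Fix $x\in \Xc$ and take arbitrary sequences $\varepsilon_n\downarrow 0$ and $\mu_n\to G_t(\cdot|x)$ where each $\mu_n$ lies in the tangent cone to $\Pc(\Xc)$ at $\delta_x$ (that is, $\mu_n(x)\le 0$, $\mu_n(y)\ge 0$ for $y\ne x$, and $\mu_n(\Xc)=0$). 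I lift $\mu_n$ to a full signed kernel $K_n\in\Sf$ by setting $K_n(\cdot|x):=\mu_n$ and $K_n(\cdot|y):=G_t(\cdot|y)$ for $y\ne x$. Lemma~\ref{l:tangent} applied coordinatewise yields $K_n\in\Tc_{\Qs}(\Ic)$, and the norm identity gives $\|K_n-G_t\|=\|\mu_n-G_t(\cdot|x)\|\to 0$. The semi-differentiability of $\Mf_t$ in the direction $G_t$ then yields, via Definition~\ref{d:Kder},
\[
\mathrm{dist}\Bigl(\tfrac{1}{\varepsilon_n}[\Mf_t(\Ic+\varepsilon_n K_n)-\Ic],\,\Df(G_t)\Bigr)\to 0.
\]

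For the first Pompeiu--Hausdorff bound, take $d_n\in\tfrac{1}{\varepsilon_n}[\Ac_t(x,\delta_x+\varepsilon_n\mu_n)-\delta_x]$ and write $d_n=\tfrac{1}{\varepsilon_n}(\nu_n-\delta_x)$ with $\nu_n\in\Ac_t(x,\delta_x+\varepsilon_n\mu_n)$. I extend $\nu_n$ to a full kernel $M_n\in\Mf_t(\Ic+\varepsilon_n K_n)$ by choosing, for each $y\ne x$, any $M_n(\cdot|y)\in\Ac_t(y,\delta_y+\varepsilon_n G_t(\cdot|y))$; such a choice exists because the dual representation \eqref{At} makes $\Ac_t$ nonempty-valued. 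The semi-differentiability of $\Mf_t$ furnishes $D_n\in\Df(G_t)$ with $\|\tfrac{1}{\varepsilon_n}(M_n-\Ic)-D_n\|\to 0$, and $1$-Lipschitz fiber projection then gives $D_n(x)\in\Gf_t(x)$ satisfying $\|d_n-D_n(x)\|\to 0$. Conversely, for any $g=D(x)\in\Gf_t(x)$ with $D\in\Df(G_t)$, the opposite Hausdorff bound produces $M_n\in\Mf_t(\Ic+\varepsilon_n K_n)$ with $\|\tfrac{1}{\varepsilon_n}(M_n-\Ic)-D\|\to 0$, whence $\tfrac{1}{\varepsilon_n}(M_n(\cdot|x)-\delta_x)$ lies in $\tfrac{1}{\varepsilon_n}[\Ac_t(x,\delta_x+\varepsilon_n\mu_n)-\delta_x]$ and converges to $g$. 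Nonemptiness of $\Gf_t(x)$ follows from nonemptiness of $\Df(G_t)$, and combining the two bounds gives the required Pompeiu--Hausdorff convergence.

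The only real obstacle is the lifting step: I must find $K_n\in\Tc_{\Qs}(\Ic)$ converging to $G_t$ whose $x$-fiber equals the arbitrary prescribed $\mu_n$. The off-diagonal extension by the fixed direction $G_t$ settles this, because tangency at $\Ic$ is a fiber-wise condition by Lemma~\ref{l:tangent}. Everything else amounts to straightforward bookkeeping inside the Pompeiu--Hausdorff framework in which the semi-derivative is defined.
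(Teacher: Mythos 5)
Your proof is correct. The paper offers no argument for this lemma at all---it simply states that the observation ``follows directly from Definition \ref{d:Kder} and formula \eqref{M-mult}''---so there is no proof to compare against; your write-up supplies the details the authors omit. You correctly identify the one point that actually requires care: semi-differentiability of $\Mf_t$ is hypothesized only for tangent sequences $K_n\to G_t$ in $\Tc_{\Qs}(\Ic)$, whereas the fiberwise claim must handle arbitrary tangent sequences $\mu_n\to G_t(\,\cdot\,|x)$ at $\delta_x$, and your lift (prescribing $\mu_n$ on the $x$-fiber and freezing the other fibers at $G_t$) resolves this, using that both the tangency conditions of Lemma \ref{l:tangent} and the norm on $\Sf$ are fiberwise, so the projection $K\mapsto K(\cdot|x)$ is $1$-Lipschitz and carries the Pompeiu--Hausdorff convergence down to the fiber.
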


It follows from \eqref{At-lin} and Lemma \ref{l:semi-local1} that for all $x\in \Xc$
\begin{equation}
\label{semi-local1}
{\rm dist}\big( \delta_x+ (r-t) \Gf_t(x), \Ac_t(x,Q_{t,r}(x))\big) \le o(r-t).
\end{equation}
We call the semi-derivatives $\Gf_t(x)$, $x\in \Xc$, the \emph{risk multigenerator} associated withe the generator $G_t$
and the risk multikernel $\Mf_t$.

In our analysis, we  use the support functions $s_{\Gf_t(x)}:\Lc_\infty(\Xc)\to\R$ of the risk multigenerators
$\Gf_t(x)$, defined as follows
\[
s_{\Gf_t(x)}(v) = \sup_{\lambda\in \Gf_t(x)}\sum_{y\in \Xc}\lambda(y)v(y).
\]
\begin{lemma}
\label{l:support1}
The support functions $s_{\Gf_t(x)}(\cdot)$ are Lipschitz continuous, with a universal Lipschitz constant for all $x\in \Xc$.
\end{lemma}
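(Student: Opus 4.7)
The plan is to exploit the standard Lipschitz estimate for support functions of bounded sets: if $C\subset \Mc(\Xc)$ is bounded in total variation, then $s_C$ is Lipschitz on $\Lc_\infty(\Xc)$ with constant $\sup_{\lambda\in C}\|\lambda\|_{TV}$. I would therefore reduce the claim to producing a single constant $B$, independent of $x$ and $t$, such that every $\lambda\in \Gf_t(x)$ has total variation at most $B$.

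First, I would appeal to Lemma \ref{l:ccb}, which guarantees that $\Df(G_t)$ is a bounded subset of $\Tc_{\Qs}(\Ic)$ in the norm of $\Sf$. Denote $B = \sup_{D\in \Df(G_t)} \|D\| < \infty$. Next, I would use the characterization of $\Tc_{\Qs}(\Ic)$ from Lemma \ref{l:tangent}: for any $D\in \Df(G_t)$ and any $x\in\Xc$, the signed measure $D(\,\cdot\,|x)$ is nonnegative off the diagonal, nonpositive at $x$, and sums to $0$. Hence its total variation is exactly
\[
\|D(\,\cdot\,|x)\|_{TV} = 2\sum_{y\ne x} D(y|x) = -D(x|x) + \sum_{y\ne x} D(y|x),
\]
which equals $\sum_{y\in\Xc}\varphi(y) D(y|x)$ for the particular choice $\varphi(y) = 1$ if $y\ne x$ and $\varphi(x) = -1$. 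Since this $\varphi$ satisfies $-1\le \varphi\le 1$, the definition of the norm on $\Sf$ gives $\|D(\,\cdot\,|x)\|_{TV} \le \|D\| \le B$, with the same $B$ for every $x$.

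Finally, given any $v,w\in \Lc_\infty(\Xc)$ and any $\lambda = D(\,\cdot\,|x)\in \Gf_t(x)$ with $D\in\Df(G_t)$, I would estimate
\[
\Big|\sum_{y\in\Xc}\lambda(y)\big(v(y)-w(y)\big)\Big| \le \|\lambda\|_{TV}\,\|v-w\| \le B\,\|v-w\|,
\]
and then take the supremum over $\lambda\in\Gf_t(x)$ to get
\[
\big| s_{\Gf_t(x)}(v) - s_{\Gf_t(x)}(w)\big| \le B\,\|v-w\|,
\]
which is the desired uniform Lipschitz property. The main (and only nontrivial) obstacle is the translation between the norm on $\Sf$, which is defined via test functions on $\Xc$ evaluated conditionally on $x$, and the pointwise total-variation norm of $D(\,\cdot\,|x)$; the test function $\varphi$ chosen above handles this, relying on the sign structure supplied by Lemma \ref{l:tangent}.
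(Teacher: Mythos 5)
Your proof is correct, but it takes a genuinely different route from the paper. The paper argues softly: by Lemma \ref{l:ccb} the sets $\Gf_t(x)$ are bounded, closed, and convex, so each support function is finite-valued; being convex and finite it is continuous (citing Phelps), hence Lipschitz near $0$, and positive homogeneity upgrades this to global Lipschitz continuity; the universal constant then comes from the uniform boundedness of the sets $\Gf_t(x)$ over $x$. You instead run the direct dual-norm computation: the support function of a set bounded in total variation by $B$ is $B$-Lipschitz with respect to $\|\cdot\|_\infty$, and you extract $B$ from Lemma \ref{l:ccb} via the definition of the norm on $\Sf$. Your argument is more elementary and quantitative --- it produces an explicit Lipschitz constant, namely the norm bound on $\Df(G_t)$, which is exactly the quantity the paper later assumes to be uniformly bounded in $t$ when it invokes $L_s$ in Theorem \ref{t:ODE} --- and it avoids the external citation. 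One small remark: your detour through Lemma \ref{l:tangent} to exhibit the test function $\varphi$ realizing the total variation is not strictly necessary, since the norm on $\Sf$ is by definition the supremum over $x$ of the total variations of the conditional measures $K(\,\cdot\,|x)$, so $\|D(\,\cdot\,|x)\|_{TV}\le\|D\|$ holds without any sign information; but the extra step is harmless and makes the bound transparent.
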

\begin{proof}
By Lemma \ref{l:ccb}, the sets $\Gf_t(x)$ are bounded, closed, and convex.
Therefore, the maxima exist.
This implies that the support function $s_{\Gf_t(x)}(\cdot)$ is finite-valued.
Since it is convex, by \cite[Prop. 3.3]{Phelps}, it is continuous. Consequently, by \cite[Prop. 1.6]{Phelps} it is Lipschitz continuous in
a neighborhood of~0. As it is positively homogenous, it is Lipschitz continuous on the entire space $\Lc_\infty(\Xc)$. Since all sets
$\Gf_t(x)$ are uniformly bounded over $x\in \Xc$, a universal Lipschitz constant exists.
\end{proof}

We can now prove the main result of this section.
\begin{theorem}
\label{t:ODE}
Suppose a dynamic risk measure $\varrho=\big\{\varrho_{t,T}\big\}_{t \in [0,T]} $ is strongly stochastically conditionally time-consistent, coherent, and Markovian, satisfies Assumption \ref{a:continuity}, and its
risk multikernels $\Mf_t$ are semi-differentiable in the directions $G_t$ at $I$, for $t\in [0,T]$, with the risk
multigenerators $\Gf_t$  measurable and uniformly bounded for $t\in [0,1]$ (in the Pompeiu--Hausdorff sense).
Then the risk value functions $v_t(x)$ satisfy the following system  of differential equations
\begin{gather}
\frac{ dv_t(x)}{d t} = -c_t(x) - s_{{\Gf_t(x)}} (v_t),\quad t\in [0,T],\quad x\in \Xc, \label{diff1}\\
v_T(x) = f(x), \quad x\in \Xc.\label{final1}
\end{gather}
\end{theorem}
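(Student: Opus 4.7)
The strategy is to pass to the infinitesimal limit $r\downarrow t$ in the dual representation \eqref{sigma-full-dual}, using the semi-derivative estimate \eqref{semi-local1} to linearize the subdifferential set $\Ac_t(x,Q_{t,r}(x))$ around $\delta_x$. The output of this linearization produces exactly the support function $s_{\Gf_t(x)}$ that appears in~\eqref{diff1}.

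First, I would rewrite \eqref{sigma-full-dual} in the form
\[
v_t(x) - v_r(x) = \int_t^r c_\tau(x)\,d\tau + \Big[\, s_{\Ac_t(x,Q_{t,r}(x))}(v_r) - v_r(x)\,\Big] + \varDelta_{t,r}^{x},
\]
where $s_{A}(v)=\max_{\mu\in A}\sum_{y\in\Xc} v(y)\mu(y)$ denotes the support function. Because the norm on $\Sf$ is dual to functions bounded by $1$, the difference between the support functions of two closed sets is controlled by their Pompeiu--Hausdorff distance times $\|v_r\|_\infty$. Combining this with \eqref{semi-local1} and the fact that the support function of $\delta_x+(r-t)\Gf_t(x)$ equals $v_r(x)+(r-t)s_{\Gf_t(x)}(v_r)$, I obtain
\[
s_{\Ac_t(x,Q_{t,r}(x))}(v_r) = v_r(x) + (r-t)\,s_{\Gf_t(x)}(v_r) + o(r-t),
\]
where the $o(r-t)$ term is uniform in $x\in\Xc$, since $\Xc$ is finite and $v_r$ is uniformly bounded.

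Substituting back, invoking the bound \eqref{Delta-bd} which yields $|\varDelta_{t,r}^{x}|=o(r-t)$ (as $(p+1)/p>1$), dividing by $r-t$, and rearranging, I arrive at
\[
\frac{v_r(x)-v_t(x)}{r-t} = -\frac{1}{r-t}\int_t^r c_\tau(x)\,d\tau - s_{\Gf_t(x)}(v_r) + o(1).
\]
Passing to $r\downarrow t$, the left side becomes $\frac{dv_t(x)}{dt}$, the integral average converges to $c_t(x)$ (everywhere if $c$ is continuous in time, otherwise a.e.\ by Lebesgue differentiation), and the Lipschitz continuity of $s_{\Gf_t(x)}$ from Lemma~\ref{l:support1}, together with the continuity of $r\mapsto v_r(x)$ (which itself follows from the rewritten relation, since its right-hand side is $O(r-t)$), yields $s_{\Gf_t(x)}(v_r)\to s_{\Gf_t(x)}(v_t)$. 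This gives~\eqref{diff1}. The terminal condition \eqref{final1} is immediate: by normalization and translation invariance, $\varrho_{T,T}$ acts as the identity on $\Zc_T$, so $v_T(x)=f(x)$.

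The main technical obstacle is the uniform-in-$x$ transfer of the set-valued semi-derivative estimate \eqref{semi-local1} to a scalar $o(r-t)$ estimate for the difference of support functions at $v_r$. This relies on three ingredients working in concert: the norm on $\Sf$ being precisely the one dual to $\Lc_\infty(\Xc)$, the uniform boundedness of $v_r$ (derived iteratively from the recursion and Assumption~\ref{a:continuity}), and the finiteness of $\Xc$. A secondary obstacle is justifying the existence of the derivative at each $t$; for this, one either posits continuity of $t\mapsto c_t(x)$ and $t\mapsto \Gf_t$, or interprets \eqref{diff1} in the a.e.\ sense, in which case the continuous version of $v_t$ is characterized by \eqref{diff1}--\eqref{final1} in integral form.
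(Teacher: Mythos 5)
Your proposal is correct and follows essentially the same route as the paper's proof: substitute the semi-derivative estimate \eqref{semi-local1} into the dual representation \eqref{sigma-full-dual}, convert the Pompeiu--Hausdorff bound into an $o(r-t)$ error for the support functions (using uniform boundedness of $v_r$ and finiteness of $\Xc$), deduce continuity of $t\mapsto v_t(x)$ from the resulting relation, divide by $r-t$, and pass to the limit $r\downarrow t$. The only substantive difference is that the paper first establishes existence, uniqueness, and Lipschitz continuity in $t$ of the solution of \eqref{diff1}--\eqref{final1} via a contraction-mapping argument on the equivalent integral equation (facts used later in the discrete-time approximation theorem), which your argument omits but which is not required to verify that $v$ itself satisfies the system.
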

\begin{proof}
First, we prove that the system \eqref{diff1}--\eqref{final1} has a unique solution. For $\delta>0$, we
define the space $\Wc_{[T-\delta,T]}$ of functions $v:[T-\delta,T]\times \Xc\to\R$, which are continuous with respect to the first argument and bounded with respect to both arguments. On this space,
we define an operator $F:\Wc_{[T-\delta,T]}\to \Wc_{[T-\delta,T]}$ as follows:
\[
[F(v)]_t(x) = f(x) + \int_t^T c_\tau(x)\;d\tau +\int_t^T s_{\Gf_\tau(x)}(v_\tau)\;d\tau.
\]
It is well-defined because  both $s_{\Gf_\tau(x)}(v_\tau)$ and
$c_\tau(x)$ are bounded and measurable.
We shall prove that $F$ is a contraction mapping, provided $\delta>0$ is small enough.
For any two functions $v,w\in \Wc_{[T-\delta,T]}$ we have
\[
[F(v)]_t(x) -[F(w)]_t(x) =
\int_t^T \big[ s_{\Gf_\tau(x)}(v_\tau) - s_{\Gf_\tau(x)} (w_\tau)\big]\;d\tau.
\]
Denoting by $L_s$ the universal Lipschitz constant of the support functions
$s_{\Gf_\tau(x)}(\cdot)$ (which exists by Lemma \ref{l:support1} and the uniform boundedness
of $\Gf_\tau(x)$)
we can write the inequality
\[
\big| [F(v)]_t(x) -[F(w)]_t(x)\big| \le \delta L_s \| v-w\|.
\]
If $0<\delta < 1/L_s$, by virtue of Banach's contraction mapping theorem, equations \eqref{diff1}--\eqref{final1} have a unique solution
$v^*$ in $[T-\delta,T]\times \Xc$. Re-defining the operator $F$ as
\[
[F(v)]_t(x) = v^*_{T-\delta}(x) + \int_t^{T-\delta} c_\tau(x)\;d\tau
+\int_t^{T-\delta}  s_{\Gf_\tau(x)} (v_\tau)\;d\tau,
\]
on $\Wc_{[T-2\delta,T-\delta]}$ and continuing in the same way, we conclude that the system \eqref{diff1}--\eqref{final1} has a unique solution $v^*$ on the entire domain $[0,T]\times\Xc$.
Directly from \eqref{diff1} we see that the derivative  $\frac{d}{d t}v^*_t(x)$ is uniformly bounded
for all $x\in \Xc$, and thus $v^*_t(x)$ is Lipschitz continuous with respect to $t$ with some universal constant
$L_v$.

We shall prove that in fact $v^*\equiv v$.
Using estimate \eqref{semi-local1} in \eqref{sigma-full-dual}, we obtain
\begin{equation}
\label{DPcf21}
v_t(x) = \int_t^r c_\tau(x)\,d\tau
+ \max_{\mu \in \delta_x+ (r-t) \Gf_t(x)} \sum_{y\in\Xc}v_r(y)\mu(y) + \gamma_{t,r}^x
 + \varDelta_{t,r}^{x}
\end{equation}
where
$\big| \gamma_{t,r}^x\big| \le o(r-t)\big\|v_r\big\|$.
Consequently,
\begin{equation}
\label{DPcf31}
v_t(x) =
 v_r(x) +
\int_{t}^{r}c_{\tau}(x)\,d\tau +
(r-t) s_{\Gf_t(x)} \big(v_r\big) + \gamma_{t,r}^x
 + \varDelta_{t,r}^{x}.
\end{equation}
Since the risk measure is coherent, all $v_t(x)$ are uniformly bounded by $T \|c\|+ \|f\|$.
The uniform boundedness of the sets $\Gf_t(x)$, $t\in [0,T]$, implies that
$(r-t) s_{\Gf_t(x)} \big(v_r\big)\to 0$, as $r-t\to 0$. Consequently, \eqref{DPcf31} implies that
 the functions $t\mapsto v_t(x)$ are continuous.
Subtracting $v_r(x)$ from both sides and dividing by $r-t$ we obtain the relation:
\[
\frac{v_r(x)-v_t(x)}{r-t} =-\frac{1}{r-t}\int_{t}^{r}c_{\tau}(x)\,d\tau -s_{\Gf_t(x)} \big(v_r\big) + O(r-t),
\]
where $O(r-t)\to 0$ when $r\downarrow t$. Passing to the limit
with $r\downarrow t$, we obtain the differential equation \eqref{diff1}
 with the terminal condition
\eqref{final1}. \end{proof}

As observed in section \ref{s:short},
if $c\equiv 0$, Assumption \ref{a:continuity} is not needed.

In the risk-neutral case and with $c\equiv 0$, the system  \eqref{diff1}--\eqref{final1}
reduces to the classical backward Kolmogorov equations for $v_t(x)=\Eb[ f(X_T)| X_t=x]$:
\[
\frac{ dv_t(x)}{d t} =  - \sum_{y\in \Xc} G_t(y|x) v_t(y),\quad t\in [0,T],\quad x\in \Xc.
\]
Our results extend this equation to the risk-averse case with coherent stochastically time-consistent Markov risk measures.

\section{Discrete-Time Approximations}
\label{s:discrete-approximations}


In this section we discuss approximations of Markov dynamic risk measures by discrete-time measures. We make the following additional assumption.

\begin{assumption}
\label{a:uniform}
For all  $x\in \Xc$ the functions $t\mapsto c_t(x)$ and $t\mapsto \Gf_t(x)$ are uniformly continuous on $[0,T]$.
\end{assumption}

Let us set $\varepsilon_{_{N}} = \frac{T}{N}$ for a natural number $N>0$, and define
$t_i = i\varepsilon_{_{N}}$, $i=0,\dots,N$. Our continuous-time Markov chain viewed at times $\{t_i\}$ is a discrete-time Markov chain with
transition kernels $Q^N_i(y|x) = Q_{t_i,t_{i+1}}(y|x)$. In the discrete-time chain we define the cost of a state $x$ at time ${t_i}$ as  $\varepsilon_{_{N}} c_{i\varepsilon_{_N}}(x)$. The final cost is $f(x)$.

Consider  transition risk mappings $\sigma_t:\Xc\times\Pc(\Xc)\times\Lc_\infty(\Xc)\to \R$.  Using the kernels $Q^N_i$, we can evaluate the
risk of the discrete-time chain as in \cite{Ruszczynski-2010,Fan-Ruszczynski-2015}. A sequence of functions $v_{t_i}^N:\Xc\to\R$, $i=0,1,\dots,N$, exists, such that
\begin{equation}
\label{DPc}
v_{t_i}^N(x) = \varepsilon_{_{N}} c_{t_i}(x) + \sigma_{t_i}\big(x,Q_{i}^N(x),v_{t_{i+1}}^N\big), \quad x\in \Xc,\quad i=0,\dots,N-1,
\end{equation}
with $v^N_T\equiv f$.

We consider each $v_{t_i}^N(x)$ as an approximation of risk of the chain starting from the state $x$ at time~$t_i$.
We extend the collection of functions $v_{t_i}^N$, $i=0,1,\dots,N$, to a function $v^N: [0,T]\times\Xc \to\R$ by linear
interpolation, that is,  by defining for $i=0,1,\dots,N-1$,
\[
v^N_t(x) = \frac{t_{i+1}-t}{t_{i+1}-t_i} v_{t_i}^N(x) + \frac{t - t_{i}}{t_{i+1}-t_i} v_{t_{i+1}}^N(x), \quad t\in [t_i,t_{i+1}), \quad x\in \Xc.
\]
By construction, these functions are elements of the Banach space $\Wc$ of functions $v:[0,T]\times \Xc \to \R$,
which are continuous with respect to the first argument,
with the norm
\[
\|v\| =  \max_{x\in \Xc}\max_{0\le t \le T} \big|v_t(x)\big|.
\]
For each fixed $t\in [0,T]$, the function $v_t(\cdot)$ is an element of $\Lc_\infty(\Xc)$; we shall denote it simply by $v_t$.

\begin{theorem}
\label{t:approx}
If the assumptions of Theorem \ref{t:ODE} and Assumption \ref{a:uniform} are satisfied, then the
 the functions $v^N$ converge to $v$ in $\Wc$, as $N\to\infty$.
\end{theorem}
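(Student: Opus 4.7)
The plan is to show that the discrete-time recursion \eqref{DPc} is a first-order consistent discretization of the backward ODE \eqref{diff1}--\eqref{final1}, and then deduce convergence by a discrete Gronwall argument. Since $v$ is the unique solution of \eqref{diff1}--\eqref{final1} (Theorem \ref{t:ODE}) and $v^N$ is built from its grid values by linear interpolation, it suffices to prove that $e_i^N := \max_{x\in\Xc}|v_{t_i}(x)-v^N_{t_i}(x)| \to 0$ uniformly in $i$, and then extend to all $t\in[0,T]$ via the Lipschitz continuity of $v$ and the piecewise-linearity of $v^N$.

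First I would rewrite the one-step recursion \eqref{DPc} in a form that mimics Euler's scheme for \eqref{diff1}. Using the dual representation \eqref{At} together with the semi-differentiability estimate \eqref{semi-local1} applied at each grid time $t_i$ with $r=t_{i+1}$, and the fact that $\Mf_{t_i}(\cdot)$ is characterized by the families $\Ac_{t_i}(x,\cdot)$, I would argue that
\[
\sigma_{t_i}\bigl(x, Q^N_i(x), v^N_{t_{i+1}}\bigr) = v^N_{t_{i+1}}(x) + \varepsilon_N\, s_{\Gf_{t_i}(x)}\bigl(v^N_{t_{i+1}}\bigr) + \rho_i^N(x),
\]
where $|\rho_i^N(x)| \le o(\varepsilon_N)\,\|v^N_{t_{i+1}}\|$, with an $o(\varepsilon_N)$ that is uniform in $x\in \Xc$ (finite) and in $i$ (using Assumption \ref{a:uniform}, which promotes the pointwise-in-$t$ semi-differentiability estimate to a uniform one). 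A preliminary induction based on Lemma \ref{l:support1}, together with the uniform boundedness of $c$ and of the multigenerators $\Gf_t$, gives a bound $\|v^N_{t_i}\|\le \|f\|+T\|c\|+ O(1)$ independent of $N$.

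Next I would compare this discretized recursion with the integrated form of the ODE:
\[
v_{t_i}(x) = v_{t_{i+1}}(x) + \int_{t_i}^{t_{i+1}}c_\tau(x)\,d\tau + \int_{t_i}^{t_{i+1}} s_{\Gf_\tau(x)}(v_\tau)\,d\tau.
\]
Using Assumption \ref{a:uniform}, the Lipschitz continuity of $s_{\Gf_\tau(x)}(\cdot)$ from Lemma \ref{l:support1}, the uniform boundedness of the sets $\Gf_\tau(x)$ in Pompeiu--Hausdorff sense (so that $v\mapsto s_{\Gf_\tau(x)}(v)$ converges uniformly to $s_{\Gf_t(x)}(v)$ as $\tau\to t$), and the Lipschitz continuity of $t\mapsto v_t$ established in the proof of Theorem \ref{t:ODE}, both integrands can be replaced by their values at $t_i$ (evaluated at $v_{t_{i+1}}$) with an overall error $\varepsilon_N\,\omega(N)$, where $\omega(N)\downarrow 0$ uniformly in $i$.

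Subtracting the two expressions and taking the maximum over $x\in\Xc$ yields
\[
e_i^N \le e_{i+1}^N + \varepsilon_N L_s\, e_{i+1}^N + \varepsilon_N\,\omega(N),
\]
with $L_s$ the universal Lipschitz constant of Lemma \ref{l:support1}. Since $e_N^N = 0$ by the shared terminal condition, the discrete Gronwall inequality applied backward from $i=N$ gives $e_i^N \le \omega(N)\bigl(e^{T L_s}-1\bigr)/L_s$, which tends to zero uniformly in $i$. To conclude $\|v^N-v\|_{\Wc}\to 0$, I would use the Lipschitz continuity of $v$ in $t$ and the fact that $v^N$ is piecewise linear between grid points with slopes bounded by $\|c\|+L_s\|v^N\|+o(1)/\varepsilon_N\cdot\varepsilon_N$, so that for any $t\in[t_i,t_{i+1})$ we can bound $|v_t(x)-v^N_t(x)|$ by $\max(e_i^N,e_{i+1}^N)+O(\varepsilon_N)$.

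The main obstacle will be step two: converting the pointwise-in-$t$ semi-differentiability estimate \eqref{semi-local1} into an error bound $o(\varepsilon_N)$ that is uniform in $i=0,\dots,N-1$. This is where Assumption \ref{a:uniform} (uniform continuity of $t\mapsto \Gf_t(x)$ in Pompeiu--Hausdorff sense) and the uniform boundedness of the multigenerators must be used delicately, together with the continuity of the parameterization $t\mapsto G_t$ coming from \eqref{generator}, to guarantee a single modulus $\omega(N)$ governing all grid steps simultaneously.
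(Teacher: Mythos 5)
Your proposal follows essentially the same route as the paper's proof: linearize the one-step recursion via the dual representation and the semi-derivative estimate \eqref{semi-local1}, integrate the ODE over one grid step, compare to obtain the recursion $e_i^N \le (1+\varepsilon_N L_s)e_{i+1}^N + o(\varepsilon_N)$, sum to get the $\bigl(e^{L_sT}-1\bigr)/L_s$ bound, and interpolate to all $t$. You also correctly identify the one delicate point (uniformity in $i$ of the $o(\varepsilon_N)$ remainder), which the paper handles via Assumption \ref{a:uniform} with little additional comment.
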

\begin{proof}
Using the dual representation \eqref{At} in \eqref{DPc}, for $i=N-1,N-2,\dots, 0$, we obtain
\begin{equation}
\label{DPcf}
v_{i\varepsilon_{_{N}}}^N(x) = \varepsilon_{_{N}} c_{i\varepsilon_{_N}}(x) + \max_{\mu \in \Ac(x,Q_{i\varepsilon_{_{N}}}^N(x))} \sum_{y\in\Xc}v_{(i+1)\varepsilon_{_{N}}}^N(y)\mu(y), \quad x\in \Xc,
\end{equation}
Substitution of the estimate \eqref{semi-local1} into \eqref{DPcf} yields an expression
similar to \eqref{DPcf31}:
\begin{equation}
\label{DPcf3}
v_{i\varepsilon_{_{N}}}^N(x) =
 v_{(i+1)\varepsilon_{_{N}}}^N(x) +
\varepsilon_{_{N}} c_{i\varepsilon_{_N}}(x) +
\varepsilon_{_{N}} s_{\Gf_{i\varepsilon_{_N}}(x)} \big(v_{(i+1)\varepsilon_{_{N}}}^N\big) + \gamma^N_{i\varepsilon_{_{N}}}(x),
\end{equation}
where $\big| \gamma^N_{i\varepsilon_{_{N}}}(x)\big| \le o(\varepsilon_{_{N}})
$
and $v^N_T(\cdot) = f(\cdot)$.

The system of differential equations \eqref{diff1} implies that
\begin{align*}
v_{i\varepsilon_{_{N}}}(x)
&= v_{(i+1)\varepsilon_{_{N}}}(x) +
\hspace{-1em}\int\limits_{i\varepsilon_{_{N}}}^{(i+1)\varepsilon_{_{N}}}\hspace{-1em}c_{t}(x)\,dt
+\hspace{-1em}\int\limits_{i\varepsilon_{_{N}}}^{(i+1)\varepsilon_{_{N}}} \hspace{-1em}s_{\Gf_t(x)} (v_t)\;dt \\
{} = {} &\,v_{(i+1)\varepsilon_{_{N}}}(x) + \varepsilon_{_{N}} c_{i\varepsilon_{_N}}(x)+
\hspace{-1em}\int\limits_{i\varepsilon_{_{N}}}^{(i+1)\varepsilon_{_{N}}}\hspace{-1em}\big[ c_{t}(x)- c_{i\varepsilon_{_N}}(x)\big] \,dt
+ \varepsilon_{_{N}} s_{\Gf_{i\varepsilon_{_N}}(x)} (v_{(i+1)\varepsilon_{_{N}}})\\
&{} +\hspace{-1em}\int\limits_{i\varepsilon_{_{N}}}^{(i+1)\varepsilon_{_{N}}} \hspace{-1em}
\big[ s_{\Gf_t(x)} (v_t) - s_{\Gf_{i\varepsilon_{_N}}(x)} (v_{t}) \big] \;dt
 +\hspace{-1em}\int\limits_{i\varepsilon_{_{N}}}^{(i+1)\varepsilon_{_{N}}} \hspace{-1em}
\big[ s_{\Gf_{i\varepsilon_{_N}}(x)} (v_t) - s_{\Gf_{i\varepsilon_{_N}}(x)} (v_{(i+1)\varepsilon_{_{N}}}) \big] \;dt.
\end{align*}
Using the Lipschitz property of the functions $s_{\Gf_t(x)}(\cdot)$ and Assumption \ref{a:uniform}, we obtain
\begin{equation}
\label{DPcf*}
v_{i\varepsilon_{_{N}}}(x)     = v_{(i+1)\varepsilon_{_{N}}}(x)
+ \varepsilon_{_{N}} c_{i\varepsilon_{_N}}(x) +
\varepsilon_{_{N}} s_{\Gf_{i\varepsilon_{_N}}(x)} (v_{(i+1)\varepsilon_{_{N}}}) + \theta^N_{i\varepsilon_{_{N}}}(x),
\end{equation}
where
$\big| \theta^N_{i\varepsilon_{_{N}}}(x)\big| \le o(\varepsilon_{_{N}})$.
Comparing \eqref{DPcf3} with \eqref{DPcf*} we get
\begin{align*}
\lefteqn{\big\| v_{i\varepsilon_{_{N}}}^N - v_{i\varepsilon_{_{N}}} \big\| \le \big\| v_{(i+1)\varepsilon_{_{N}}}^N - v_{(i+1)\varepsilon_{_{N}}} \big\|}\  \\
&{\quad }
+\varepsilon_{_{N}}  \max_{x\in \Xc} \big\| s_{\Gf_{i\varepsilon_{_N}}(x)} (v_{(i+1)\varepsilon_{_{N}}}^N) -  s_{\Gf_{i\varepsilon_{_N}}(x)} (v_{(i+1)\varepsilon_{_{N}}}) \big\|
 + \big\| \gamma^N_{i\varepsilon_{_{N}}}\big\| + \big\| \theta^N_{i\varepsilon_{_{N}}}\big\|\\
&\le
(1+\varepsilon_{_{N}} L_s) \big\| v_{(i+1)\varepsilon_{_{N}}}^N - v_{(i+1)\varepsilon_{_{N}}} \big\|
+ o(\varepsilon_{_{N}}),
\end{align*}
where $o(\varepsilon_{_{N}})/\varepsilon_{_{N}} \to 0$, when $N\to\infty$. Recursive application of the last inequality yields the bounds
\begin{align*}
\big\| v_{i\varepsilon_{_{N}}}^N - v_{i\varepsilon_{_{N}}} \big\|
&\le
o(\varepsilon_{_N}) \sum_{j=0}^{N-i-1} (1+\varepsilon_{_{N}} L_s)^j  \\
&\le \frac{o(\varepsilon_{_{N}})}{L_s\varepsilon_{_{N}}}\big[ ( 1 + {L_s}{\varepsilon_{_{N}}})^N - 1 \big] \le
  \frac{o(\varepsilon_{_{N}})}{L_s\varepsilon_{_{N}}}\big(e^{L_sT}  - 1 \big), \quad i=0,1,\dots,N.
\end{align*}
The functions $t\mapsto v^N_t(x)$ are piecewise linear with break points at $i\varepsilon_{_{N}}$, $i=0,1,\dots,N$, and the functions
$t\mapsto v_t(x)$ are Lipschitz continuous with a constant $L_v$. For $i\varepsilon_{_{N}} \le t \le (i+1)\varepsilon_{_{N}}$, we
have
\[
v_{t}^N = \alpha v_{i\varepsilon_{_{N}}}^N + (1-\alpha) v_{(i+1)\varepsilon_{_{N}}}^N, \ \text{where} \
\alpha = \frac{(i+1)\varepsilon_{_{N}}-t}{\varepsilon_{_{N}}}.
\]
We can thus transform the error bound at the knots $i\varepsilon_{_N}$ to a uniform bound;
for any $t\in [i\varepsilon_{_{N}} , (i+1)\varepsilon_{_{N}}]$ the following chain of inequalities holds:
\begin{align*}
\big\| v_{t} - v_{t}^N   \big\|
&\le \alpha \| v_{t} - v_{i\varepsilon_{_{N}}}^N\| + (1-\alpha)\| v_{t} - v_{(i+1)\varepsilon_{_{N}}}^N\| \\
&\le \alpha \| v_{i\varepsilon_{_{N}}} - v_{i\varepsilon_{_{N}}}^N\| + (1-\alpha)\| v_{(i+1)\varepsilon_{_{N}}} - v_{(i+1)\varepsilon_{_{N}}}^N\|\\
& \qquad + \alpha \| v_{t} - v_{i\varepsilon_{_{N}}}\| + (1-\alpha)\| v_{t} - v_{(i+1)\varepsilon_{_{N}}}\| \\
& \le \frac{o(\varepsilon_{_{N}})}{L_s\varepsilon_{_{N}}}\big(e^{L_sT}  - 1 \big) + 2L_v\varepsilon_{_{N}}\alpha(1-\alpha)\\
&\le
  \frac{o(\varepsilon_{_{N}})}{L_s\varepsilon_{_{N}}}\big(e^{L_sT}  - 1 \big) + \frac{1}{2}L_v\varepsilon_{_{N}}.
\end{align*}
Since the right hand side converges to 0, as $N\to\infty$, we conclude that $v^N\to v$ in $\Wc$.
\end{proof}


\end{document}